\newcommand{\vectornorm}[1]{\left|\left|#1\right|\right|}
\newtheorem{thm}{Theorem}[section]
\theoremstyle{definition}
\newcommand{\id}{\operatorname{id}}
\newcommand{\range}{\operatorname{Range}}
\newcommand{\btau}{\overline{\tau}}
\newcommand{\p}{\mathbbm{p}}
\newcommand{\Np}{\mathbf{N}_{\mathbbm{p}}}
\newcommand{\Npw}{\mathbf{N}_{\p_{\Lambda}}}
\newcommand{\m}{\mathbf{m}}
\newcommand{\Ms}{\mathcal{M}_{sig}}
\newcommand{\Mp}{\mathcal{M}_{prob}}
\newcommand{\M}{\mathcal{M}_{+}}
\newcommand{\B}{\mathcal{B}}
\newcommand{\R}{\mathbb{R}}
\renewcommand{\L}{\mathcal{L}}
\newcommand{\n}{\mathbb{N}}
\newcommand{\pr}{\operatorname{Prob}}
\newcommand{\ew}{\mathbb{E}}
\newcommand{\supp}{\operatorname{supp\,}}
\newcommand{\card}{\operatorname{card\,}}
\newcommand{\sbullet}{\raise .5ex\hbox{\tiny$\bullet$}}
\newcommand{\olambda}{\overline{\lambda}}
\newcommand{\ulambda}{\underline{\lambda}}
\newcommand{\norma}[1]{\vectornorm{#1}}
\newcommand{\<}{\left\langle}  
\renewcommand {\>}{\right\rangle}  
\numberwithin{equation}{section}
\begin{document}

\title{\bf Exponential ergodicity of some Markov dynamical system with application to a Poisson driven stochastic differential equation}

\author{Dawid Czapla, Joanna Kubieniec}
\address{Institute of Mathematics, University of Silesia in Katowice,\\ 
Bankowa 14, 40-007 Katowice, Poland}
\email{dawid.czapla@us.edu.pl,\,\, jkubieniec@math.us.edu.pl}
\subjclass[2010]{Primary:  37A30, 60J05, 60H10, Secondary: 37H10} 
\keywords{Markov operator, piecewise-deterministic Markov process, asymptotic stability, invariant measure, exponential ergodicity, Poisson driven equation}
\begin{abstract}
We are concerned with the asymptotics of the Markov chain given by the post-jump locations of a certain piecewise-deterministic Markov process with a state-dependent jump intensity. We provide sufficient conditions for such a model to possess a unique invariant distribution, which is exponentially attracting in the dual bounded Lipschitz distance. Having established this, we generalise a result of J. Kazak on the jump process defined by a Poisson driven stochastic differential equation with a solution-dependent intensity of perturbations.
\end{abstract}

\maketitle

\section*{Introduction} 
The starting point of our study is a piecewise-deterministic Markov process (PDMP), which arises from a random dynamical system defined in a manner similar to that in \cite{b:horbacz_diss, b:horbacz_cont, b:dawid, b:horbacz_2006, b:horbacz_sl}. Such a system, say $\{Y(t)\}_{t\geq 0}$, evolves through random jumps (at random time points) in a complete separable metric space, denoted here by $Y$, while its deterministic behaviour between jumps is governed by a finite number of semiflows $S_i$, $i\in I$. These semiflows are randomly switched with time from jump to jump (like e.g. in \cite{b:benaim1}). The state directly after the jump, called a \emph{post-jump location}, is determined by a transformation of the current position, selected randomly from an uncoutable set. Random dynamical systems with the above-described or a similar jump mechanism offer a description of a large class of phenomena arising in variuous domains of natural science, in molecular biology especially (e.g. models for gene expression  \cite{b:dawid, b:lipniacki, b:mackey_tyran} or an autoregulated gene in a bacterium \cite{b:HHS}), but also in population biology (e.g. \cite{b:othmer}) and communication networks (e.g. \cite{b:dumas}).

Most available results on such dynamical systems seem to concentrate on the situation where the jumps occur according to a Poisson process with constant intensity. In this paper we consider the case where the jump intensity depends on the trajectory of the process (as in \cite{b:asia}). To be more precise, letting $\{\tau_n\}_{n\in\n_0}$ denote the sequence of jump times, the conditional probability that the next jump, say $\tau_{n+1}$, will occur before time $t$ has the form
$$\pr(\Delta\tau_{n+1}\leq t\,|\,Y(\tau_n)=y,\,\xi(\tau_n)=i\,)=1-\exp\left(-\int_0^t \lambda(S_i(s,y))\,ds \right),$$
where $\{\xi(t)\}_{t\geq 0}$ is a stochastic process with values in $I$ which indicates the semiflow that currently determines the evolution of the system.

We shall focus on the limit behaviour of the Markov chain $\{(Y_n,\xi_n)\}_{n\in\n_0}$ given by the post-jump locations of the PDMP $\{(Y(t),\xi(t))\}_{t\geq 0}$, that is, defined by \hbox{$(Y_n,\xi_n)=(Y(\tau_n),\xi(\tau_n))$} for $n\in\n_0$. Such a discrete-time dynamical system (in the context presented here) includes as a special case, for instance, a simple cell cycle model examined by Lasota and Mackey \cite{b:cells}, and, furthermore, may prove useful for improvement of the model in \cite{b:HHS}.

Our first goal is to establish a criterion for \emph{exponential ergodicity} of the transition operator associated with the chain $\{(Y_n,\xi_n)\}_{n\in\n_0}$, analogously as in \cite[Theorem 4.1]{b:dawid}. More precisely, letting $(\cdot)P$ stand for the Markov operator acting on Borel measures in such a way that $\mu_{n+1}=\mu_n P$ for $n\in\n_0$, where $\mu_n$ is the distibution of $(Y_n,\xi_n)$, we provide sufficient conditions under which there exists exactly one probability measure $\mu_*$ that is invariant for $P$, i.e. $\mu_*=\mu_* P$. It turns out that such a measure is \emph{exponentially attracting} in the so-called dual bounded Lipschitz distance, which is induced by the \emph{Fortet-Mourier norm} (\cite{b:dudley,b:lj}), denoted by $\norma{\cdot}_{FM}$. We mean by this that there exists a constant $\beta\in\left[0,1\right)$ such that
$$\norma{\mu P^n -\mu _*}_{FM}\leq C(\mu)\beta^n\;\;\;\mbox{for all}\;\;\;n\in\n\;\;\;\mbox{and}\;\;\;\mu\in\Mp^{\rho_c,1},$$
where $\Mp^{\rho_c,1}$ stands for the set of all Borel probability measures on $Y\times I$ satisfying $\int \rho_c(x,x_*)\mu(dx)<\infty$ for some $x_*$, with $\rho_c$ denoting a suitable metric in $Y\times I$. The idea underlying the proof of this result pertains to asymptotic coupling methods, introduced by Hairer \cite{b:hairer}, which in turn boils down to veryfing the assumptions of \cite[Theorem 2.1]{b:kapica} (cf. \cite[Appendix]{b:dawid})

The second part of the paper discusses the ergodicity of the jump chain associated with the Markov process determined by a Poisson driven stochastic differential equation (PDSDE), which is close in spirit to that developed by Lasota and Traple in \cite{b:las_poiss} (cf. also \cite{b:myjak_pois, b:szar_pois}). PDSDEs have quite important applications in biomathematics (e.g. \cite{b:diek1,b:diek2}), physics and engineering (e.g. \cite{b:snyder}), as well as in financial investment models (see e.g. \cite{b:cont}). The research literature abounds with variations on stochastic equations of a similar type. Here, we investigate the version which takes into account the ideas proposed by Horbacz \cite{b:horbacz_poiss} and Kazak \cite{b:kazak}.
Namely, given a Poisson random counting measure $\Np(dt,d\theta)$ on \hbox{$\left[0,\infty\right)\times \Theta$} and functions $a, \sigma$ and $\lambda$, we consider the initial value problem of the form
$$
dY(t)=a(Y(t),\xi(t))\,dt+\int_{\Theta}\sigma(Y(t),d\theta)\,\Np(\Lambda(dt),\theta),\;\;\;Y(0)=Y_0,
$$
with 
$$\Lambda(t)=\int_0^t \lambda(Y(s))\,ds,\;\;\;\mbox{and} \;\;\;\xi(t)=\xi_n\;\;\;\mbox{for}\;\;\;\Np(\left[0,\Lambda(t)\right],\Theta)=n,\;n\in\n_0,$$ 
where $\{Y(t)\}$ is an unknown process with values in a closed subset of a separable Hilbert space, and $\{\xi_n\}_{n\in\n_0}$ is a sequence of random variables with values in a finite set and distributions conditional on the realisation of $\{Y(t)\}_{t\geq 0}$. Assuming that
$$\Np(\left[0,t\right],A)=\operatorname{card}\{n\in\n:\,\btau_n\leq t,\,\eta_n\in A\},$$
one can define a sequence of $\left[0,\infty\right)$-valued random variables $\{\tau_n\}_{n\in\n_0}$ such that $\Lambda(\tau_n)=\overline{\tau}_n$ for $n\in \n_0$.  We shall give a set of quite easily verifiable restrictions on the functions $a, \sigma$ and $\lambda$ ensuring that the solution process $\{Y(t)\}_{t\geq 0}$ determines the PDMP $\{(Y(t),\xi(t))\}_{t\geq 0}$ which we have described so far (with jump times $\tau_n$ conditionally distributed on the solution) , and that the Markov chain $\{(Y(\tau_n),\xi(\tau_n))\}_{t\geq 0}$ is exponentially ergodic. This result generalises \hbox{\cite[Theorem 4.10]{b:kazak}.}

The outline of the paper is as follows. Section \ref{sec:1} contains notation and basic definitions related to the theory of Markov operators. In Section \ref{sec:2} we quote the result of Kapica and \'Sl\k{e}czka \cite{b:kapica}, which we refer to in the next part of the paper. Our main result, providing sufficient conditions for the exponential ergodicty of the dynamical system under consideration, is established in Section \ref{sec:3}. Finally, in Section \ref{sec:4} we give a criterion for the exponential ergodicity of the jump chain related to the aforementioned PDSDE.

\section{Preliminaries} \label{sec:1}
Consider a metric space $(E,\rho )$ endowed with the $\sigma$-field $\B(E)$ of all its Borel subsets. By $B(x,r)$ we denote the open ball in $E$ centered at $x$ and radius $r$. For every set $A\subset E$ we write $\mathbbm{1}_A$  for the indicator function of $A$, i.e. $\mathbbm{1}_A(x)=1$ for $x\in A$ and $\mathbbm{1}_A(x)=0$ otherwise.

Let $\Ms(E)$ stand for the space of all finite, countably additive functions (signed Borel measures) on $\B(E)$. By $\M(E)$ and $\Mp(E)$ we denote the subsets of $\Ms(E)$ consisting of all non-negative measures and all probability measures, respectively. Furthermore, we write $\Mp^{\rho,1}(E)$ for the set of all \hbox{$\mu\in\Mp(E)$} satisfying $\int_E \rho(x,x_*)\,\mu(dx)<\infty$ for some $x_*\in X$.

Moreover, by $B(E)$ we denote the space of all bounded, Borel, real valued functions on $E$, equipped with the supremum norm $\norma{\cdot}_{\infty}$, and we use $C(E)$ to denote its subspace consisting of all continuous functions. For $f \in B(E)$ and $\mu  \in \Ms(E)$ we write 
$$\<f,\mu \>=\int_E f(x)\mu (dx).$$

A continuous function $V:E\rightarrow [0,\infty)$ is called \emph{Lyapunov function} if it is bounded on bounded sets and for some $x_0\in E$,
$$\lim_{\rho(x,x_0)\rightarrow \infty}V(x)=\infty.$$

We say that a sequence $\{\mu_n\}_{n \geq 1}\subset \M(E)$ \emph{converges weakly} to a measure $\mu  \in \M(E)$ whenever
\[\lim_{n\rightarrow \infty}\<f,\mu _n\>=\<f,\mu \> \;\;\;\mbox{ for all}\;\;\; f \in C(E).\]

The set $\Ms(E)$ will be endowed with the so-called \emph{Fortet-Mourier norm} \cite{b:las_frac,b:lj} (equivalent to the \emph{Dudley norm} \cite{b:dudley}), given by 
\[\vectornorm{\mu}_{FM} =\sup\{|\<f,\mu \>|:f \in \mathcal{F}_{FM}(E) \}\;\;\; \textrm{for}\;\;\; \mu  \in \Ms(E),  \] 
where $\mathcal{F}_{FM}(E)$ stands for the set of all  $f \in C(E)$ such that $\norma{f}_{\infty}\leq 1$ and \hbox{$|f(x)-f(y)| \leq \rho (x,y)$} for $x,y \in E.$
It is well-known (cf. \cite{b:dudley}) that, whenever $E$ is a Polish space, the metric $(\mu,\nu)\mapsto\norma{\mu-\nu}_{FM}$ induces the topology of weak convergence of measures in $\Mp(E)$.

A function $P:E\times\mathcal{B}(E)\rightarrow \left[0,1\right]$ is called a \emph{(sub)stochastic kernel} if for each \hbox{$A\in\mathcal{B}(E)$}, $x\mapsto P(x,A)$ is a measurable map on $E$, and for each $x\in E$, $A\mapsto P(x,A)$ is a (sub)probability Borel measure on $\B(E)$. We also sometimes call a stochastic kernel a \emph{transition probability kernel} or, simply, \emph{transition law}.

For an arbitrary given (sub)stochastic kernel $P$  we consider two operators defined by
\begin{equation} \label{regp} \mu P(A)=\int_{E} P(x,A)\,\mu(dx)\;\;\;\mbox{for}\;\;\; \mu\in\M(E),\;A\in \B(E),\end{equation}
and
\begin{equation}\label{regd} Pf(x)=\int_{E} f(y)\,P(x,dy)\;\;\;\mbox{for}\;\;\; x\in E,\;f\in B(E). \end{equation}

If the kernel $P$ is stochastic then the map $(\cdot) P:\M(E) \to \M(E)$ given by \eqref{regp} is called a \emph{regular Markov operator}. It is easy to check that
\begin{equation} \label{duality} \<f,\mu P\>=\<Pf,\mu\>\;\;\;\mbox{for}\;\;\; f\in B(E),\;\mu\in\M(E),\end{equation}
and, therefore, $P(\cdot):B(E)\to B(E)$ defined by \eqref{regd} is said to be the \emph{dual operator} of $(\cdot)P$. Let us note that the dual operator can be extended in the usual way to a linear operator defined on the space of all bounded below Borel functions $\overline{B}(E)$ so that \eqref{duality} holds for all $f\in \overline{B}(E)$. 

A regular Markov operator $P$ is called \emph{Feller} if its dual operator preserves the continuity, i.e. $Pf\in C(E)$ for every $f\in C(E)$. 

A measure $\mu_*\in\M(E)$ is called \emph{invariant} with respect to $P$ if $\mu_* P = \mu_*$. We shall say that such a measure is \emph{attracting} (in $\Mp^{\rho,1}(E)$)  whenever $\mu_*\in \Mp^{\rho,1}(E)$ and for each $\mu \in \Mp^{\rho,1}(E)$,
$$\lim_{n\rightarrow \infty}\norma{\mu P^n -\mu _*}_{FM} =0.$$
If the rate of this convergence is exponential, that is, there exists $\beta\in \left[0,1\right)$ such that, for every $\mu \in \Mp^{\rho,1}(E)$ and some constant $C(\mu)\in\mathbb{R}$, we have
$$\norma{\mu P^n -\mu _*}_{FM}\leq C(\mu)\beta^n\;\;\;\mbox{for all}\;\;\;n\in\n,$$
then $\mu_*$ is called \emph{exponentially attracting}. If such an exponentially attracting invariant probability measure exists then the operator $P$ is said to be \emph{exponentially ergodic}.

It is well known that for every stochastic kernel $P$ and an arbitrary measure \hbox{$\mu_0\in\Mp(E)$} we can always define a discrete-time homogeneus Markov chain $\{\Phi_n\}_{n\in\n_0}$ for which $\mu_0$ will be the distribution of $\Phi_0$, and $P$ will serve as a description of the one-step transition laws, that is
$$\pr(\Phi_0\in A)=\mu_0(A)\;\;\;\mbox{for}\;\;\;A\in\mathcal{B}(E),$$
\begin{equation} \label{trans_n} P(x,A)=\pr(\Phi_{n+1}\in A|\Phi_n=x)\;\;\;\mbox{for}\;\;\;x\in E,\;A\in\mathcal{B}(E),\;n\in\n_0.\end{equation}
Then the Markov operator  $(\cdot)P$  corresponding to the kernel \eqref{trans_n} decribes the evolution of the distributions $\mu_n:=\pr(\Phi\in \cdot)$, that is
$$\mu_{n+1}=\mu_n P\;\;\;\mbox{for}\;\;\;n\in\n_0. $$
In our further considerations we will use the symbol $\pr_x$ for the probability measure $\pr(\cdot\,|\,\Phi_0=x)$ and $\ew_x$ for the expectation with respect to $\pr_x$.

Assuming that a stochastic kernel \hbox{$P:E\times \mathcal{B}(E)\to\left[0,1\right]$} is given, we will say that a time-homogeneus Markov chain evolving on the space $E^2$ (endowed with the product topology) is  a \emph{{Markovian} coupling} of $P$  whenever its transition law \hbox{$B:E^2\times \mathcal{B}(E^2)\to\left[0,1\right]$} satisfies
$$
B(x,y,A\times E)= P(x,A)\;\;\;\mbox{and}\;\;\; B(x,y,E\times A)= P(y,A)
$$
for all $x,y\in E$ and $A\in\B(E)$. Note that, if $Q: E^2\times\mathcal{B}(E^2)\to\left[0,1\right]$ is a substochastic kernel satisfying
\begin{equation} 
\label{qeq}
Q(x,y,A\times E)\leq P(x,A)\;\;\;\mbox{and}\;\;\; Q(x,y,E\times A)\leq P(y,A),
\end{equation}
for all $x,y\in E$ and $A\in\B(E)$, then we can always construct a {Markovian} coupling of $P$ whose transition function $B$ satisfies $Q\leq B$. Indeed, it suffices to define the family \hbox{$\{R(x,y,\cdot):\;x,y\in E\}$} of measures on $\mathcal{B}(E^2)$, which on rectangles $A\times B\in\mathcal{B}(E^2)$ are given by
\begin{align*}
R(x,y,A\times B)&=
\frac{1}{1-Q(x,y,E^2)}\left[P(x,A)-Q(x,y,A\times E))\right.\\
&\quad\times\left.(P(y,B)-Q(x,y,E\times B)\right]
\end{align*}
if $Q(x,y,E^2)<1$, and $R(x,y, A\times B)=0$ otherwise. It is then easily seen that \hbox{$B:=Q+R$} is a stochastic kernel satisfying $Q\leq B$, and that the Markov chain with transition function $B$ is a coupling of $P$. 

\section{A general criterion on the exponential ergodicity for Markov--Feller operators}\label{section:auxiliary} \label{sec:2}
In this section we quote \cite[Theorem 2.1]{b:kapica} of Kapica and \'Sl\k{e}czka (cf. also \cite[Appendix]{b:dawid}), which provides sufficient conditions for a Markov operator to possess an exponentially attracting invariant probability measure. This is a crucial tool in the proof of our main result, given in the next section.

\begin{thm}\label{ks-stab}
Suppose that \hbox{$P:\M(E)\to \M(E)$} is a Markov operator, which enjoys the Feller property, and that there exists a substochastic kernel $Q$ on $E^2\times\mathcal{B}(E^2)$ satisfying \eqref{qeq}. Furthermore, assume that the following conditions hold:
\begin{itemize}
\phantomsection
\item[(B1)]\label{cnd:B1}There exist a Lyapunov function $V:E\rightarrow\left[0,\infty\right)$ and constants $a\in(0,1)$ and $b>0$ satisfying
$$PV(x)\leq aV(x)+b\;\;\;\mbox{for}\;\;\;x\in E.$$

\phantomsection
\item[(B2)]\label{cnd:B2}For some $F\in\mathcal{B}(E^2)$ and some $R>0$ the following conditions are satisfied:
\begin{itemize}
\item[\sbullet] $\supp Q(x,y,\cdot)\subset F$ for $(x,y)\in F$;
\item[\sbullet] There exists a {Markovian} coupling $\{(\Phi^1_n,\Phi^2_n)\}_{n\in\n_0}$ of $P$ with transition function $B$, {\hbox{satisfying}} $Q\leq B$, such that for
\begin{equation} \label{zb_k} K:=\{(x,y)\in F:\, V(x)+V(y)<R\} \end{equation}
and $\sigma:=\inf\{n\in\n:\,(\Phi^1_n,\Phi^2_n)\in K\}$ we can choose constants \hbox{$\zeta\in (0,1)$} and $\bar{C}>0$ so that
{\begin{equation} \label{fastint} \ew_{(x,y)}(\zeta^{-\sigma})\leq \bar{C}\quad\mbox{whenever}\;\;V(x)+V(y)<\frac{4b}{1-a}.\end{equation}}
\end{itemize}

\phantomsection
\item[(B3)]\label{cnd:B3}There exists a constant $q\in (0,1)$ such that
$$\int_{E^2}\rho(u,v)\, Q(x,y,du,dv)\leq q\rho(x,y)\;\;\;\mbox{for}\;\;\;(x,y)\in F.$$

\phantomsection
\item[(B4)]\label{cnd:B4}Letting $U(r)=\{(x,y):\, \rho(x,y)\leq r\}$ for $r>0$, we have
$$\inf_{(x,y)\in F}Q(x,y,U(q\rho(x,y)))>0.$$

\phantomsection
\item[(B5)]\label{cnd:B5}There exist constants $l>0$ and $\nu\in\left(0,1\right]$ such that $$Q(x,y,E^2)\geq 1-l\rho(x,y)^{\nu} \;\;\; \mbox{for} \;\;\; (x,y)\in F.$$
\end{itemize}
Then the operator $P$ possesses a unique invariant measure $\mu^{*}\in \Mp(E)$ and \hbox{$\<V,\mu^*\><\infty$}. Moreover, there exist constants \hbox{$\beta\in \left[0,1\right)$} and $C\in\mathbb{R}$ such that
\begin{equation} \label{rate} \norma{\mu P^n-\mu^{*}}_{FM}\leq C\beta^n(\<V,\,\mu+\mu^*\>+1) \end{equation}
for all $n\in\n$ and every {$\mu\in\Mp(E)$} satisfying $\<V,\mu\><\infty$. 
\end{thm}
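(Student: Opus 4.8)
The plan is to reduce the entire statement to one \emph{geometric contraction estimate} for the coupled dynamics and then read off existence, uniqueness and the rate \eqref{rate} from it. Using the construction given right after \eqref{qeq}, I would fix a Markovian coupling $\{(\Phi^1_n,\Phi^2_n)\}_{n\in\n_0}$ of $P$ with transition function $B=Q+R\geq Q$, and aim at the pointwise bound
$$\ew_{(x,y)}\!\big[\min\{\rho(\Phi^1_n,\Phi^2_n),2\}\big]\leq C\beta^n\big(V(x)+V(y)+1\big),\qquad n\in\n,$$
for some $C>0$ and $\beta\in[0,1)$. Since every $f\in\mathcal{F}_{FM}(E)$ obeys $|f(u)-f(v)|\leq\min\{\rho(u,v),2\}$, starting the coupling from a coupling $\gamma$ of two measures $\mu$ and $\widetilde\mu$ (i.e. with marginals $\mu$ and $\widetilde\mu$) and using the marginal property of $B$ gives $\norma{\mu P^n-\widetilde\mu P^n}_{FM}\leq\ew_\gamma[\min\{\rho(\Phi^1_n,\Phi^2_n),2\}]$; integrating the displayed bound against $\gamma$ then yields $\norma{\mu P^n-\widetilde\mu P^n}_{FM}\leq C\beta^n(\<V,\mu\>+\<V,\widetilde\mu\>+1)$, which is \eqref{rate} once $\widetilde\mu=\mu^*$.

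The first ingredient is \emph{a priori} moment control from (B1): iterating the drift inequality gives $P^nV\leq a^nV+b/(1-a)$, so $\<V,\mu P^n\>$ stays bounded whenever $\<V,\mu\><\infty$, and, since the marginals of $B$ are copies of $P$, the additive function $W(x,y):=V(x)+V(y)$ satisfies the coupled drift $\int_{E^2}W\,B(x,y,\cdot)=PV(x)+PV(y)\leq aW(x,y)+2b$. This forces $W(\Phi^1_n,\Phi^2_n)$ to return geometrically often to the sublevel set $\{W<4b/(1-a)\}$ on which \eqref{fastint} is available. The second ingredient is a \emph{local contraction on $F$}: applying Jensen's inequality to the concave map $t\mapsto t^{\theta}$ with a small $\theta\in(0,\nu]$, together with $Q(x,y,E^2)\leq1$, upgrades (B3) to $\int_{E^2}\rho(u,v)^{\theta}\,Q(x,y,du,dv)\leq q^{\theta}\rho(x,y)^{\theta}$ on $F$, with $q^{\theta}<1$; by (B5) the mass carried by the residual $R$ is at most $l\rho(x,y)^{\nu}$, so near the diagonal $R$ contributes only a lower-order term, while (B4) guarantees that a fixed positive fraction of mass genuinely lands in the ball $U(q\rho(x,y))$, which is what keeps the \emph{truncated} functional $\rho^{\theta}\wedge1$ contracting rather than merely its untruncated average. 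Together these give a strict one-step contraction of $\rho^{\theta}\wedge1$ under $B$ on $F$.

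The core of the argument is to merge these two mechanisms. Because the contraction is available only on $F$ while the hitting time $\sigma$ of $K\subset F$ is controlled only through the exponential-moment bound \eqref{fastint}, I would introduce a single Lyapunov functional on $E^2$ of the form
$$\mathcal{L}(x,y)=\big(\rho(x,y)^{\theta}\wedge1\big)+\varepsilon\big(V(x)+V(y)\big)$$
with $\varepsilon>0$ small, and show via the two ingredients above that it decreases geometrically between successive visits of the coupled chain to $K$. The role of \eqref{fastint} is precisely to make the excursions away from $K$ summable: a regeneration argument over the visit times to $K$ combines the per-visit contraction factor with the geometrically integrable inter-visit times to produce $\ew_{(x,y)}[\rho(\Phi^1_n,\Phi^2_n)^{\theta}\wedge1]\leq C\beta^n\mathcal{L}(x,y)$; since $\min\{\rho,2\}\leq2(\rho^{\theta}\wedge1)$, this is exactly the reduction inequality stated above. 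Finally, existence follows by completeness of $(\Mp(E),\norma{\cdot}_{FM})$: the moment bound of the first ingredient makes $(\delta_xP^n)_n$ Cauchy in the FM norm, its limit $\mu^*$ is invariant because $P$ is Feller, and $\<V,\mu^*\><\infty$ by lower semicontinuity of $\<V,\cdot\>$ under FM-convergence; uniqueness and \eqref{rate} are then immediate from the contraction.

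I expect the merging step to be the main obstacle: reconciling the purely local contraction on $F$ with the fact that the coupled chain must first reach $K$ --- a trip quantified only in the weak, exponential-moment sense of \eqref{fastint} --- and choosing $\theta$, $\varepsilon$ and the truncation so that a \emph{single} geometric rate $\beta$ emerges simultaneously from the contraction and the recurrence, is where all of (B1)--(B5) have to be balanced against one another.
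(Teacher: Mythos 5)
You should first note that the paper does not prove Theorem \ref{ks-stab} at all: it is quoted from Kapica and \'Sl\k{e}czka \cite{b:kapica}, and Section \ref{sec:2} only sketches their argument --- a coupling whose transition law dominates $Q$, an asymptotic coupling time $\tau$ (the time from which onwards every step is drawn from $Q$) with exponential tail $\pr_{(x,y)}(\tau>n)\leq c\gamma^n(V(x)+V(y)+1)$, and contraction of the \emph{untruncated} distance via (B3) after time $\tau$. Your outer shell is compatible with that scheme and is sound: the reduction of \eqref{rate} to the pointwise coupled estimate $\ew_{(x,y)}[\min\{\rho(\Phi^1_n,\Phi^2_n),2\}]\leq C\beta^n(V(x)+V(y)+1)$, the coupled drift for $W(x,y)=V(x)+V(y)$, the Jensen upgrade of (B3) to $\int\rho^\theta\,dQ\leq q^\theta\rho^\theta$ for a sub-probability kernel, and the closing existence/uniqueness argument are all fine.

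The genuine gap is the sentence claiming ``a strict one-step contraction of $\rho^{\theta}\wedge 1$ under $B$ on $F$'', and with it the merging/regeneration step on which everything hinges. That claim is false for $(x,y)\in F$ far from the diagonal: the hypotheses allow, e.g., $q=1/2$, $\nu=1$, $\rho(x,y)=10$, $l\geq 1/20$, with $Q(x,y,\cdot)$ of mass $1/2$ concentrated on pairs at distance $5$ and $R(x,y,\cdot)$ of mass $1/2$ on pairs at distance $10$; then (B3), (B4), (B5) and the support condition all hold, yet $\int(\rho^{\theta}\wedge 1)\,dB=1=\rho(x,y)^{\theta}\wedge 1$. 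The mechanism of the failure: once $q\rho(x,y)\geq 1$ the truncation erases the benefit of (B3)--(B4) (mass landing in $U(q\rho(x,y))$ still has $\rho^{\theta}\wedge 1=1$), and (B5) is vacuous once $\rho(x,y)\geq l^{-1/\nu}$. Condition (B4) cannot repair this in one step, because it only provides an event of probability $\delta_0:=\inf_{(x,y)\in F}Q(x,y,U(q\rho(x,y)))>0$ on which the \emph{true} distance is multiplied by $q$, and roughly $\log\rho(x,y)/\log(1/q)$ consecutive such events --- probability about $\delta_0^{k}$, not $1-o(1)$ --- are needed before reaching the near-diagonal regime where (B5) has content. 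The same dead zone defeats $\mathcal{L}$: for $(x,y)\in K$ with $W$ small but $\rho(x,y)$ moderate (sublevel sets of $V$ bound $\rho$ on $K$ only by a constant that may exceed $1$ and $l^{-1/\nu}$), one can have $\int\mathcal{L}\,dB\geq 1$ while $\beta_0\mathcal{L}(x,y)\leq \beta_0(1+\varepsilon R)<1$ for small $\varepsilon$, so no uniform rate exists, and $\mathcal{L}$ likewise has no reason to decrease between successive visits to $K$ (at each visit it is simply of order $1$). This is exactly where the cited proof does something different: it never contracts a truncated distance pointwise, but shows (i) that from any point of $K$ the event ``all subsequent steps are drawn from $Q$'' has probability bounded below --- via finitely many (B4)-events followed by an infinite-product estimate based on (B5) and (B3) together with the support condition; (ii) that (B1) and \eqref{fastint} give returns to $K$ with exponential moments, whence a geometric-trials argument yields the exponential tail of $\tau$; and (iii) that on $\{\tau\leq n/2\}$ the untruncated distance contracts by iterating (B3) and applying Markov's inequality. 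Your plan would have to be rebuilt along these lines (equivalently, you must prove that $K$ is small for the truncated metric in a genuinely multi-step sense); as written, the one-step contraction it rests on is not available.
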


The key idea underlying the above result is the existence of a Markovian coupling whose transition function can be decomposed into two substochastic kernels, of which one, denoted by $Q$, enjoys the contractivity property \hyperref[cnd:B3]{(B3)} and plays a dominant role in the evolution of the coupled Markov chain. By the dominance of $Q$ we mean that there exists a finite (random) time, say $\tau$, from which onwards the next step of the coupled chain is drawn only according to $Q$, and the probability that $\tau$ occurs soon after the chain starts is small. More precisely, conditions \hyperref[cnd:B1]{(B1)}-\hyperref[cnd:B5]{(B5)} guarantee that for some $\gamma\in (0,1)$ and $c\in\mathbb{R}$ we have $\pr_{(x,y)}(\tau>n)\leq c\gamma^n(V(x)+V(y)+1)$ for all $n\in\n$ and $(x,y)\in E^2$ (which follows from  \cite[Lemmas 2.1 and 2.2]{b:kapica}). Such a dominant contractive part $Q$ makes the copies of the Markov chain (governed by $P$) possible to couple at infinity in an exponential rate. The proof of this theorem is based on certain asymptotic coupling techniques, introduced in \cite{b:hairer} by Hairer.

\section{Exponential ergodicity for some random dynamical system}\label{sec:3}
This section contains the main result of the paper. As mentioned in Introduction, we shall consider a piecewise-deterministic Markov process with a state-dependent jump intensity (as in \cite{b:asia}; cf. \cite{b:horbacz_diss, b:dawid}), whose deterministic evolution is governed by a finite collection of semiflows. The subject of our investigation is the exponential ergodicity of the Markov chain given by the post-jump locations of such a process.
\subsection{Model description and assumptions}
Let $(Y,\rho)$ be a Polish space, i.e. a complete separable metric space, and set $\R_+=[0,\infty)$. Further, let  $\Theta $ be a compact interval, and put $I=\{1,...,N\}$, where $N$ is an arbitrary (and fixed) positive integer. Now define $X:=Y\times I$. In what follows, the space $X$ is endowed with the metric $\rho_c$ given by
\begin{equation}\label{def:roc} \rho_c((y_1,i),(y_2,j))=\rho(y_1,y_2)+c\delta (i,j)\;\;\;(y_1,i),(y_2,j)\in X,\end{equation}
where
\begin{equation}
\label{def:dirack}
\delta(i,j)= \begin{cases}
1 \textrm{\;\;for\;\;} i\neq j, \\
0 \textrm{\;\;for\;\;} i= j,
\end{cases}
\end{equation}
and $c$ is a positive constant specified at the end of this section.

We shall investigate a discrete-time dynamical system arising from a stochastic process $\{(Y(t),\xi(t))\}_{t\geq 0}$, which evolves through random jumps in the space $X$ as described below.

Assume that we are given a finite collection of semiflows ${S_i:\R_+\times Y}\rightarrow Y, $ $i\in I$, which are  continuous with respect to each variable. The \emph{semiflow property} means, as usual, that, for every $i\in I$ and each $y\in Y$,
$$S_i(0,y)=y\;\;\;\mbox{and}\;\;\;S_i(s+t,y)=S_i(s, S_i(t,y))\;\;\; \mbox{for} \;\;\;s,t \in \R_+.$$
Between jumps the process $\{Y(t)\}_{t\geq 0}$ is deterministic and evolves according to one of the semiflows, whose index is determined by $\{\xi(t)\}_{\geq 0}$.

The semiflows $S_i$ are being switched (directly after the jumps) according to a matrix of continuous functions
$\pi_{ij}:Y\rightarrow [0,1]$, $i,j\in I$ (called \emph{probabilities}), satisfying
\begin{equation}
\label{stoch_pi}
\sum_{j\in I} \pi_{ij}(y)=1\;\;\;\mbox{for}\;\;\;i\in I,\, y\in Y.
\end{equation}

At the time of a jump the process $\{Y(t)\}_{t\geq 0}$ moves to the new state determined by a transfomation $q_{\theta}:Y\to Y$, which is randomly drawn from a given family \hbox{$\{q_{\theta}:\,\theta\in\Theta\}$}. It is required that \hbox{$Y\times\Theta\ni(y,\theta) \mapsto q_{\theta}(y)\in Y$} is continuous.
Futhermore, we consider a collection of probability density functions $\Theta \ni \theta \mapsto p_{\theta}(y)$, $y\in Y$, such that $(\theta,y)\mapsto p_{\theta}(y)$ is continuous. These place-dependent densities are related to the likelihood of occurrence of $q_{\theta}$ at the jump times.
Finally, we assume that the intensity of jumps is determined by a Lipschitz continuous function \hbox{$\lambda:Y\rightarrow (0,\infty)$} such that 
\begin{equation}
\label{cnd:a7}
\ulambda=\inf_{y\in Y}\lambda(y)>0\;\;\;\mbox{and}\;\;\;
\olambda=\sup_{y\in Y}\lambda(y)<\infty.
\end{equation}
Let us also define $L:\mathbb{R}_+\times Y\times I\to \mathbb{R}_+$ by
\begin{equation} 
\label{defL}
L(t,(y,i))=\int_0^t\lambda (S_i(s,y))ds.
\end{equation}

In summary, the evolution of $\{(Y(t),\xi(t))\}_{t\geq 0}$ can described roughly as follows. Suppose that the process starts at $(y_0,i_0)\in Y\times I$. We then have $Y(t)=S_{i_0}(t,y_0)$ and \hbox{$\xi(t)=i_0$} until some random time $t_1$ (which depends on $y_0$ and $i_0$). At this moment the process $\{Y(t)\}_{t\geq 0}$ jumps to the new position $y_1:=q_{\theta_1}(Y(t_1-))=q_{\theta_1}(S_{i_0}(t_1, y_0))$, where $\theta_1\in \Theta$ is selected randomly according to the distribution with density \hbox{$\theta \mapsto p_{\theta}(S_{i_0}(t_1, y_0))$}.  Directly after this, we randomly draw \hbox{$i_1\in I$} in such a way that the probability of choosing $i_1=i$ is equal to $\pi_{i_0 i}(y_1)$. Then, from $t=t_1$ onwards, $Y(t)=S_{i_1}(t-t_1,Y(t_1))$ and $\xi(t)=i_1$  until the next jump time $t_2$. The procedure is then repeated with $(y_1,i_1)$ (replacing $(y_0,i_0)$) and is continued inductively. Hence, assuming that $t_0=0$ and $t_n\to\infty$ a.s., we obtain 
$$Y(t)=S_{i_n}(t-t_n,y_n)\;\;\;\mbox{and}\;\;\;\xi(t)=i_n\;\;\;\mbox{for}\;\;\;t\in \left[t_n,t_{n+1}\right),\,n\in\n_0.$$

In this paper, we only focus on the sequence of random variables given by the post-jump locations of such a process, that is $(Y_n,\xi_n):=(Y(\tau_n),\xi(\tau_n))$, $n\in\n_0$, where $\tau_n$ is a random variable describing the jump time $t_n$.

In order to formalise the model, on a suitable probability space, say $(\Omega, \mathcal{F}, \pr)$, we define $\{(Y_n,\xi_n)\}_{n\in\n_0}$ as follows. Let \hbox{$Y_0:\Omega\to Y$} and $\xi_0: \Omega\to I$ be random variables with arbitrary and fixed distributions. Further, define inductively the sequences $\{\tau_n\}_{n\in\n_0}$, $\{\xi_n\}_{n\in \n}$, $\{\eta_n\}_{n\in\n}$ and $\{Y_n\}_{n\in\n}$ of random variables, describing $\{t_n\}_{n\in\\n}$, $\{i_n\}_{n\in\n}$, $\{\theta_n\}_{n\in\n}$ and $\{y_n\}_{n\in\n}$, respectively, so that the following conditions are valid:
\begin{itemize}
\item[\sbullet] $\tau_n :\Omega\to \left[0,\infty\right)$, $n\in\n_0$, with $\tau_0=0$, form a strictly increasing sequence such that $\tau_n\to\infty$ a.e., and the increments \hbox{$\Delta \tau_{n}=\tau_{n}-\tau_{n-1}$} are mutually independent and have the conditional distributions given by
\begin{equation}
\label{tdist}
\pr(\Delta \tau_{n+1}\leq t\,|\,Y_n=y\;\;\mbox{and}\;\;\xi_n=i)=1-e^{-L(t,(y,i))}\;\;\mbox{for}\;\; t\geq 0,
\end{equation}
whenever $y\in Y$ and $i\in I$, where $L$ is given by \eqref{defL}. 

\item[\sbullet] $\xi_n :\Omega\to I$, $n\in \n$, satisfy
$$\pr(\xi_{n}=j\;|\;Y_{n}=y,\, \xi_{n-1}=i)=\pi_{ij}(y)\;\;\;\mbox{for}\;\;\; i, j\in I,\, y\in Y;$$

\item[\sbullet] $\eta_n:\Omega \to \Theta$, $n\in\n$, is specified by
$$ \pr(\eta_{n+1}\in D\;|\;S_{\xi_{n}}(\Delta \tau_{n+1},Y_{n})=y)
=\int_{D} p_{\theta}(y)\,d\theta$$
for all $D\in\mathcal{B}(\Theta)$ and $y\in Y$.
\item[\sbullet]  $Y_n:\Omega\to Y$, $n\in\n$, are determined by $$Y_{n+1}=q_{\eta_{n+1}}(S_{\xi_n}({\Delta \tau_{n+1}},Y_n))\;\;\;\mbox{for}\;\;\;n\in\n_0.$$
\end{itemize}
Moreover, considering
$$U_0=(Y_0,\xi_0),\;\;\;U_n=(Y_0,\tau_1,\ldots,\tau_k,\eta_1,\ldots,\eta_k,\xi_0,\ldots,\xi_k) \;\;\;\mbox{for}\;\;\;k\in\n,$$
we assume that, for every $k\in\n_0$, the random variables $\xi_{k+1}$ and $\eta_{k+1}$ are conditionally independent of $U_k$ given $\{Y_{k+1}=y,\,\xi_k=i\}$ and $\{S_{\xi_{k}}(\Delta \tau_{k+1},Y_{k})=y\}$, respectively. In addition to this, we require that $\xi_{k+1}$, $\eta_{k+1}$ and $\Delta \tau_{k+1}$ are mutually conditionally independent given $U_k$, and that $\Delta \tau_{k+1}$ is independent of $U_k$. 

Let us now define $\Delta_t: X\times\B(X)\to\left[0,1\right]$, $t\geq 0$, by
$$\Delta_t((y,i),A):=\sum_{j\in I}\int_{\Theta} \mathbbm{1}_A(q_{\theta}(S_i(t,y)),j)\pi_{ij}(q_{\theta}(S_i(t,y))p_{\theta}(S_i(t,y))\,d\theta.$$
It is then not hard to check that $\{(Y_n,\xi_n)\}_{n\in\n_0}$ is a time-homogenus Markov chain with phase space $X$ and transition law $P:X\times \B(X)\to\left[0,1\right]$ given by
\begin{equation} \label{def:stP} P((y,i),A)=\int_0^{\infty} \lambda(S_i(t,y))e^{-L(t,(y,i))}\Delta_t ((y,i),A)\,dt,\end{equation}
where $L$ is given by \eqref{defL}. The evolution of the distributions \hbox{$\mu_n:=\pr((Y_n,\xi_n)\in \cdot)$} can be then described by the Markov operator $(\cdot)P:\M(X)\to\M(X)$ corresponding to  \eqref{def:stP}, as defined in \eqref{regp}. Such an operator is sometimes called a \emph{jump operator}, since it describes the distributions of the process  $(Y(t),\xi(t))_{t\geq 0}$ (which is, clearly, also a Markov process) from jump to jump. 
 
In the analysis that follows, we will use the assumptions specified below. 

\begin{enumerate}[label={(A\arabic*)}] 

\phantomsection
\label{cnd:a1}
\item There exists $y_*\in Y$ such that
\begin{equation} 
\sup_{y\in Y} \int_0^{\infty}e^{-\ulambda t}\int_{\Theta} \rho(q_{\theta}(S_i(t,y_*)),y_*) p_{\theta}(S_i(t,y))d\theta\,dt<\infty\;\;\mbox{for}\;\;i\in I;
\end{equation}

\phantomsection
\label{cnd:a2}
\item There exist $\alpha \in \R$, $L>0$ and a bounded on bounded set function \hbox{$\L:Y\to \R_+$} such that
\begin{equation}
\rho(S_i(t, y_1), S_j(t, y_2))\leq Le^{\alpha t}\rho(y_1,y_2)+\L(y_2)\delta(i,j)
\end{equation}
for $t\geq 0$, $y_1, y_2  \in Y$, $i,j\in I$, where $\delta(i,j)$ is given by \eqref{def:dirack};

\phantomsection
\label{cnd:a3}
\item There exists a constant $L_q>0$ such that
\begin{equation} 
\int_{\Theta}\rho(q_{\theta}(y_1),q_{\theta}(y_2))\,p_{\theta}(y_1)d\theta\leq L_q\rho(y_1,y_2)\;\;\; \textrm{for}\;\;\; y_1, y_2  \in Y;
\end{equation}

\phantomsection
\label{cnd:a4}
\item There exists $L_{\lambda}>0$ such that
$$|\lambda(y_1)-\lambda(y_2)|\leq L_{\lambda}\rho(y_1,y_2)\;\;\;\mbox{for}\;\;\;y_1,y_2\in Y;$$

\phantomsection
\label{cnd:a5}
\item There exists $L_{\pi}>0$ and $L_p>0$ such that
\begin{equation} 
\begin{split}
&\sum_{j\in I}|\pi_{ij}(y_1)-\pi_{ij}(y_2)|\leq L_{\pi}\rho(y_1,y_2) \;\;\;\textrm{for}\;\;\; y_1, y_2 \in Y,\,i\in I, \\
&\int_{\Theta}|p_{\theta}(y_1)-p_{\theta}(y_2)|\,d\theta\leq L_{p}\rho(y_1,y_2)\;\;\;\mbox{for}\;\;\;y_1,y_2\in Y;\\
\end{split}
\end{equation}

\phantomsection
\label{cnd:a6}
\item There exists $\delta _{\pi}>0$ and $\delta _p>0$ such that
\begin{equation} 
\begin{split}
& \sum_{j\in I}\min\{\pi_{i_1j}(y_1),  \pi_{i_2j}(y_2)  \}\geq \delta_{\pi} \;\;\;\textrm{for}\;\;\; y_1, y_2  \in Y,\; i_1,i_2\in I,\\
&\int_{\Theta(y_1,y_2)}\min\{p_{\theta}(y_1),  p_{\theta}(y_2)  \}d\theta\geq\delta _p \;\;\;\textrm{for}\;\;\; y_1, y_2  \in Y,
\end{split}
\end{equation}
where $\theta(y_1,y_2)=\{\theta\in\Theta:\, \rho(q_{\theta}(y_1),q_{\theta}(y_2))\leq \rho(y_1,y_2)\}$.
\end{enumerate}

As we have mentioned earlier, we also require that $c$ appearing in \eqref{def:roc} is sufficiently large. The choice of $c$ depends on constants appearing in conditions  \hyperref[cnd:a1]{(A1)}-\hyperref[cnd:a4]{(A4)}. We assume, namely, that
\begin{equation}\label{nc} c\geq  
\max\left\{\frac{\olambda}{\ulambda-\alpha}, e^{\sup T},\,\frac{1}{\ulambda}
\right\}\frac{(\ulambda-\alpha)M_{\mathcal{L}}}{\ulambda L}+\frac{2(\ulambda-\alpha)}{\ulambda L},  
\end{equation}
where $T\subset \left[0,\infty\right)$ is a fixed bounded set with positive measure such that
\begin{equation}\label{e:remT} e^{\alpha t}\leq \frac{\olambda}{\ulambda-\alpha}\;\;\;\mbox{for all}\;\;\;t \in T, \end{equation}
and 
\begin{equation}\label{ml} M_{\mathcal{L}}:=\sup\{\mathcal{L}(y):\rho(y,y_*)<R\}\;\;\mbox{with}\;\;\;R:=4b/(1-a),\end{equation}
where
\begin{equation}\label{def:ab}
\begin{split}
a:&=(\olambda LL_q)(\ulambda - \alpha)^{-1},\\
b:&= \olambda\max_{i\in I} \sup_{y\in Y}\int_0^{\infty} e^{-\ulambda t} \int_{\Theta}\rho(q_{\theta}(S_i(t,y^*)),y_*)p_{\theta}(S_i(t,y))\,d\theta\,dt.
\end{split}
\end{equation}

\subsection{Exponential ergodicity of the jump opertor}
In order to establish the exponential ergodicity of the Markov operator $P$ corresponding to $\{(Y_n,\xi_n)\}_{n\in\n_0}$, we want to use Theorem \ref{ks-stab} (a similar approach is also taken e.g. in \cite{b:dawid}). For this reason, we introduce the following piece of notation, which will be useful for the rest of the section. For any $x_1=(y_1,i_1)$, \hbox{$x_2=(y_2,i_2)\in X$}, $t\geq 0$, $\theta\in\Theta$ \hbox{we put}
\begin{equation}
\begin{split}
&\textbf{p}_{\theta}(x_1,x_2,t)=p_{\theta}(S_{i_1}(t,y_1))\wedge p_{\theta}(S_{i_2}(t,y_2)), \\
&\boldsymbol{\pi}_j(x_1,x_2,t,\theta )=\pi _{i_1j}(q_{\theta}(S_{i_1}(t,y_1)))\wedge \pi _{i_2j}(q_{\theta}(S_{i_2}(t,y_2))),\\
&\textbf{q}_j(x_1,x_2,t,\theta)= (( q _{\theta}(S_{i_1}(t,y_1)),j),(q_{\theta}(S_{i_2}(t,y_2)),j)),\\
&\boldsymbol{\lambda}(x_1,x_2,t)=\lambda(S_{i_1}(t,y_1))\wedge\lambda(S_{i_2}(t,y_2)),\\
&\boldsymbol{L}(x_1,x_2,t)=e^{-L(t,y_1,i_1)} \wedge e^{-L(t,y_2,i_2)},
\end{split}
\end{equation}
where $\wedge$ denotes minimum in $\mathbb{R}_+$.

Let us now consider the space $X^2$ with the following metric:
$$\overline{\rho}_c((x_1,x_2),(z_1,z_2))=\rho_c(x_1,z_1)+\rho_c(x_2,z_2)\;\;\; \textrm{for} \;\;\;(x_1,x_2),(z_1,z_2)\in X^2.$$
Further, for each $t\geq 0$, let $\Gamma_t:X^2\times\B(X^2)\to\left[0,1\right]$ be given by
\begin{equation}
\Gamma_t(x_1,x_2,A)=\sum_{j\in I} \int_{\Theta}\mathbbm{1}_A(\textbf{q}_j(x_1,x_2,t,\theta))\boldsymbol{\pi}_j(x_1,x_2,t,\theta )\textbf{p}_{\theta}(x_1,x_2,t)d\theta,\\
\end{equation}
and define $Q:X^2\times\B(X^2)\to\left[0,1\right]$ by
\begin{equation}
\label{def:Q}
Q(x_1,x_2,A)=\int_0^{\infty}\boldsymbol{\lambda}(x_1,x_2,t)\boldsymbol{L}(x_1,x_2,t)\Gamma_t(x_1,x_2,A)dt.
\end{equation}
It is then easily seen that $Q$ is a substochastic kernel satisfying \eqref{qeq}.

We are now in a position to establish our main result.
\begin{thm}\label{thm_main}
Suppose that conditions \hyperref[cnd:a1]{(A1)}-\hyperref[cnd:a6]{(A6)} hold with
\begin{equation}
\label{cnd:LL}
LL_q\olambda+\alpha< \ulambda.
\end{equation} 
Then the Markov operator $P$ corresponding to \eqref{def:stP} has a unique invariant distribution $\mu_*$, which is exponentially attracting. More precisely, \hbox{$\mu_*\in\Mp^{\rho_c,1}(X)$} and there exist $x_*\in X$, $C \in \R$ and $\beta\in[0,1)$ such that
\begin{equation}
\norma{\mu P^n-\mu_*}_{FM}\leq C \beta^n(\int_X\rho_c(x_*,x)(\mu+\mu_*)(dx)+1)
\end{equation}
for any $\mu\in\Mp^{\rho_c,1}(X)$ and any $n\in\n$.
\end{thm}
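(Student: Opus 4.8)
The plan is to derive the assertion from Theorem~\ref{ks-stab}, applied to the operator $P$ of \eqref{def:stP} on $E=X$ (with the metric $\rho_c$) and to the substochastic kernel $Q$ of \eqref{def:Q}, which already satisfies \eqref{qeq}. As a Lyapunov function I would take $V(y,i)=\rho(y,y_*)$, with $y_*$ as in \hyperref[cnd:a1]{(A1)}; it is continuous, bounded on bounded sets, and, since $I$ is finite, tends to infinity as $\rho_c((y,i),\cdot)\to\infty$. The Feller property of $P$ is routine: for $f\in C(X)$ the integrand defining $Pf(y,i)$ is jointly continuous in $(y,t,\theta)$ by the standing continuity hypotheses on $S_i,q_{\theta},\pi_{ij},p_{\theta},\lambda$, and is dominated by $\olambda e^{-\ulambda t}\norma{f}_{\infty}$, which is integrable; dominated convergence then gives $Pf(\cdot,i)\in C(Y)$ for each $i\in I$.

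I would first check the drift condition \hyperref[cnd:B1]{(B1)}. Using $\sum_{j\in I}\pi_{ij}\equiv 1$ one finds
\[ PV(y,i)=\int_0^{\infty}\lambda(S_i(t,y))\,e^{-L(t,(y,i))}\int_{\Theta}\rho(q_{\theta}(S_i(t,y)),y_*)\,p_{\theta}(S_i(t,y))\,d\theta\,dt. \]
Bounding $\lambda\le\olambda$ and $e^{-L(t,(y,i))}\le e^{-\ulambda t}$ via \eqref{cnd:a7}, and splitting $\rho(q_{\theta}(S_i(t,y)),y_*)\le\rho(q_{\theta}(S_i(t,y)),q_{\theta}(S_i(t,y_*)))+\rho(q_{\theta}(S_i(t,y_*)),y_*)$, I would apply \hyperref[cnd:a3]{(A3)} and then \hyperref[cnd:a2]{(A2)} with $i=j$ to the first summand, and \hyperref[cnd:a1]{(A1)} to the second. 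This yields $PV\le aV+b$ with exactly the constants $a,b$ of \eqref{def:ab}, where $a<1$ is precisely the hypothesis \eqref{cnd:LL}. The contractivity \hyperref[cnd:B3]{(B3)} is the heart of the matter. Since $\Gamma_t$ charges only the pairs $\textbf{q}_j(x_1,x_2,t,\theta)$, whose two components carry the same index $j$, the discrete part of $\rho_c$ vanishes on the support of $Q$, so
\[ \int_{X^2}\rho_c(u,v)\,Q(x_1,x_2,du,dv)=\int_0^{\infty}\boldsymbol{\lambda}\,\boldsymbol{L}\sum_{j\in I}\int_{\Theta}\rho(q_{\theta}(S_{i_1}(t,y_1)),q_{\theta}(S_{i_2}(t,y_2)))\,\boldsymbol{\pi}_j\,\textbf{p}_{\theta}\,d\theta\,dt. \]
Using $\sum_{j}\boldsymbol{\pi}_j\le 1$, $\textbf{p}_{\theta}\le p_{\theta}(S_{i_1}(t,y_1))$ with \hyperref[cnd:a3]{(A3)}, and then $\boldsymbol{\lambda}\le\olambda$, $\boldsymbol{L}\le e^{-\ulambda t}$ with \hyperref[cnd:a2]{(A2)}, this is bounded by $a\,\rho(y_1,y_2)+\tfrac{\olambda L_q}{\ulambda^2}\mathcal{L}(y_2)\,\delta(i_1,i_2)$. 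On a set $F$ restricting the $Y$-coordinates to $\{\rho(\cdot,y_*)<R\}$, where $\mathcal{L}(y_2)\le M_{\mathcal{L}}$ by \eqref{ml}, this is $\le q\,\rho_c(x_1,x_2)$ for some $q\in(a,1)$ once $c$ is large enough; this is exactly what the lower bound \eqref{nc} on $c$ (together with $T$ and \eqref{e:remT}) secures.

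It remains to verify \hyperref[cnd:B4]{(B4)}, \hyperref[cnd:B5]{(B5)} and the coupling condition \hyperref[cnd:B2]{(B2)}. For \hyperref[cnd:B4]{(B4)} I would restrict the $t$- and $\theta$-integrals in $Q(x_1,x_2,U(q\rho_c(x_1,x_2)))$ to $t\in T$ and to the contraction set of \hyperref[cnd:a6]{(A6)} (on which the post-jump distance does not exceed $q\rho_c$), and use the uniform lower bounds $\delta_{\pi},\delta_p$ together with $\boldsymbol{\lambda}\ge\ulambda$ and $\boldsymbol{L}\ge e^{-\olambda t}$ to get a strictly positive infimum. For \hyperref[cnd:B5]{(B5)} I would note that $1-Q(x_1,x_2,X^2)$ is the overlap defect of $P(x_1,\cdot)$ and $P(x_2,\cdot)$; bounding the elementary ``$s\wedge t$ versus $s$'' defects of $\lambda$ via \hyperref[cnd:a4]{(A4)}, of $p_{\theta}$ and $\pi_{ij}$ via \hyperref[cnd:a5]{(A5)}, and of $e^{-L}$ via \hyperref[cnd:a4]{(A4)} and \hyperref[cnd:a2]{(A2)}, all against $\rho(S_{i_1}(t,y_1),S_{i_2}(t,y_2))$, gives $1-Q(x_1,x_2,X^2)\le l\,\rho_c(x_1,x_2)$ on $F$, i.e. \hyperref[cnd:B5]{(B5)} with $\nu=1$ (when $i_1\ne i_2$ one simply uses $\rho_c\ge c$). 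Finally, for \hyperref[cnd:B2]{(B2)} I would take the coupling $B=Q+R$ constructed in Section~\ref{sec:1}; its marginals are $P$, so $\tilde V(x_1,x_2):=V(x_1)+V(x_2)$ inherits the geometric drift $B\tilde V\le a\tilde V+2b$ from \hyperref[cnd:B1]{(B1)}, and the standard passage from a geometric drift to exponential moments of the hitting time of $K$ delivers $\ew_{(x_1,x_2)}(\zeta^{-\sigma})\le\bar C$.

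The main obstacle is the contractivity step \hyperref[cnd:B3]{(B3)}: because \hyperref[cnd:a2]{(A2)} generates the term $t\mathcal{L}(y_2)\delta(i_1,i_2)$ and $\mathcal{L}$ is only bounded on bounded sets, the contraction can hold only after passing to a set $F$ on which $\mathcal{L}(y_2)\le M_{\mathcal{L}}$, and it is this requirement that simultaneously dictates the magnitude of $c$ in \eqref{nc} and the value $R=4b/(1-a)$; reconciling this restriction with the support condition $\supp Q(x,y,\cdot)\subset F$ and with the fast-return estimate \hyperref[cnd:B2]{(B2)} is the delicate part of the argument. Once \hyperref[cnd:B1]{(B1)}--\hyperref[cnd:B5]{(B5)} are established, Theorem~\ref{ks-stab} yields the unique invariant $\mu_*\in\Mp^{\rho_c,1}(X)$ with $\<V,\mu_*\><\infty$ and the geometric bound \eqref{rate}, which is precisely the claimed exponential attraction.
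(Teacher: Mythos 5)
You follow the paper's route---Theorem \ref{ks-stab} applied to $P$ and the kernel $Q$ of \eqref{def:Q}, with the Lyapunov function $V(y,i)=\rho(y,y_*)$---and your verifications of (B1), the integral estimates behind (B3), (B4) and (B5) are essentially the paper's computations; indeed your treatment of (B5), which disposes of the case $i_1\neq i_2$ by the crude bound $1-Q(x_1,x_2,X^2)\le 1\le \rho_c(x_1,x_2)/c$, is even a slight simplification of the paper's Step 5. The genuine gap sits exactly at the point you yourself flag as ``delicate'' and then leave unresolved: the choice of the set $F$. You take $F$ to consist of pairs whose $Y$-coordinates lie in the ball $\{\rho(\cdot,y_*)<R\}$, so that $\mathcal{L}(y_2)\le M_{\mathcal{L}}$ holds on all of $F$. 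Such an $F$ cannot satisfy the support requirement of (B2): the measure $Q(x_1,x_2,\cdot)$ is carried by the pairs $\textbf{q}_j(x_1,x_2,t,\theta)=((q_\theta(S_{i_1}(t,y_1)),j),(q_\theta(S_{i_2}(t,y_2)),j))$ with $t$ ranging over all of $[0,\infty)$, and nothing in (A1)--(A6) confines the post-jump points $q_\theta(S_i(t,y))$ to any fixed bounded set; so in general $\supp Q(x_1,x_2,\cdot)\not\subset F$ even for $(x_1,x_2)\in F$, and (B2) fails for your $F$. Moreover, your assertion that ``the contraction can hold only after passing to a set $F$ on which $\mathcal{L}(y_2)\le M_{\mathcal{L}}$'' is false, and seeing why it is false is precisely the missing idea.

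The paper's resolution is to take $F:=F_1\cup F_2$, where $F_1:=\{((y_1,i_1),(y_2,i_2)):\,i_1=i_2\}$ and $F_2:=\{(x_1,x_2):\,V(x_1)+V(x_2)<R\}$ with $R=4b/(1-a)$. Because $\Gamma_t$ charges only pairs whose two components carry the same index, one gets $\supp Q(x_1,x_2,\cdot)\subset F_1\subset F$ for \emph{every} $(x_1,x_2)\in X^2$, so the support condition holds automatically. The contraction (B3) survives on this larger $F$ through a case split: if $i_1=i_2$, the troublesome term $t\mathcal{L}(y_2)\delta(i_1,i_2)$ produced by (A2) vanishes identically and no bound on $\mathcal{L}$ is needed; if $i_1\neq i_2$, then $(x_1,x_2)\in F\setminus F_1\subset F_2$, hence $\rho(y_2,y_*)<R$ and $\mathcal{L}(y_2)\le M_{\mathcal{L}}$ by \eqref{ml}, and the resulting term is absorbed into the component $c\,\delta(i_1,i_2)$ of $\rho_c(x_1,x_2)$ via the lower bound \eqref{nc} on $c$ (with $q=a$ exactly; no enlargement to some $q\in(a,1)$ is needed). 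The same case split repairs your (B4) argument, which also invokes $\mathcal{L}(y_2)\le M_{\mathcal{L}}$ when comparing post-jump distances with $q\rho_c(x_1,x_2)$. Finally, with this $F$ one has $K=\{(x_1,x_2):\,\overline{V}(x_1,x_2)<R\}=F_2$, so $\sigma$ is the hitting time of a sublevel set of $\overline{V}$, and the drift $B\overline{V}\le a\overline{V}+2b$ inherited from (B1) yields the required exponential moment bound by \cite[Lemma 2.2]{b:kapica}, completing (B2) exactly as you intended.
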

\begin{proof}
It suffices to show that all hypotheses of Theorem \ref{ks-stab} hold for $P$ and $Q$ defined by \eqref{def:stP} and \eqref{def:Q}, respectively. First of all, observe that $P$ is Feller, which follows immediately from the continuity of functions  $\pi_{i,j}$, $y\mapsto S_i(t,y)$, $y\mapsto p(y,\theta)$ and $q_{\theta}$ for all $i,j\in I$, $t\geq 0$ and $\theta\in\Theta$. Moreover, notice that \eqref{cnd:LL} in particular implies that $\alpha<\ulambda$. This will guarantee the finiteness of the integrals containing $e^{(\alpha-\ulambda)t}$, which occur in the analysis below. Our further reasoning falls naturally into five parts.

\textbf{Step 1.} Let $V:X\rightarrow [0,\infty)$ be defined by
\begin{equation}
\label{def:v}
V(y,i)=\rho(y,y_*)\;\;\; \textrm{for}\;\;\; (y,i)\in X,
\end{equation}
where $y_*$ is specified in \hyperref[cnd:a1]{(A1)}. Clearly, $V$ is a Lyapunov function, and taking $x_*:=(y_*,i_*)$ (with an arbitrary and fixed $i_*\in I$) we get $V(x)\leq \rho_c(x^*,x)$ for all $x\in X$. Now fix $(y,i)\in X$, and let $a$ and $b$ denote the constants given by \eqref{def:ab}. Further, define
$$B_j(t)=\int_{\Theta}\rho(q_{\theta}(S_j(t,y_*)),y_*)\,p_{\theta}(S_j(t,y))\,d\theta\;\;\; \mbox{for}\;\;\;t\geq 0,\,j\in I.$$
From \eqref{cnd:LL} and \hyperref[cnd:a1]{(A1)} it follows that $a\in (0,1)$ and $b<\infty$, respectively. In particular we then see that $B_j(t)<\infty$ for almost all $t\geq 0$. Using conditions \hyperref[cnd:a3]{(A3)} and \hyperref[cnd:a2]{(A2)}, sequentially, we conclude that
\begin{align}
\Delta_t V(y,i)&=\int_{\Theta} \rho(q_{\theta}(S_i(t,y)),y_*))
\Bigl[\,\sum_{j\in I}\pi_{ij}(q_{\theta}(S_i(t,y)))\,\Bigr]p_{\theta}(S_i(t,y))\, d\theta\\
&\leq \int_{\Theta} \rho(q_{\theta}(S_i(t,y)),q_{\theta}(S_i(t,y_*)))\,p_{\theta}(S_i(t,y))\, d\theta\\
&\quad + \int_{\Theta} \rho(q_{\theta}(S_i(t,y_*)),y_*)\,p_{\theta}(S_i(t,y))\, d\theta\\
&\leq L_q\rho(S_i(t,y),S_i(t,y_*))+B_i(t)\leq LL_q e^{\alpha t}\rho(y,y_*)+B_i(t).
\end{align}
Consequently, having in mind \eqref{cnd:a7}, we obtain
\begin{align}
\label{eq:av}
\begin{split}
P V(y,i)&=\int_0^{\infty} \lambda(S_i(t,y))e^{-L(t,y,i)}\, \Delta_t V(y,i)\leq \int_0^{\infty} \olambda e^{-\ulambda t}\, \Delta_t V(y,i)
 \, dt\\
& \leq \olambda L L_q \int_0^{\infty} e^{(\alpha-\ulambda)t}\,dt\,\rho(y,y_*)+\int_0^{\infty} \olambda e^{-\ulambda t} B_i(t)\,dt\\
&\leq \frac{\olambda L L_q }{\ulambda-\alpha}\rho(y,y_*)+b=a V(y,i)+b,
\end{split}
\end{align}
which establishes \hyperref[cnd:B1]{(B1)}.

\textbf{Step 2.} Let us define $R:=4b/(1-a)$ and the following subsets of $X^2$:
\begin{gather}
F_1:=\{((y_1,i_1),(y_2,i_2)): i_1=i_2\},\\ 
F_2:=\{((y_1,i_1),(y_2,i_2)): V(y_1,i_1)+V(y_2,i_2)<R\}.
\end{gather}
We will prove that \hyperref[cnd:B2]{(B2)} is satisfied with the above defined $R$ and \hbox{$F:=F_1 \cup F_2.$}

We first need to show that $\supp Q(x_1,x_2,\cdot)\subset F$ for  every \hbox{$(x_1,x_2)\in X^2.$}
For this purpose, let 
$$(x_1,x_2):=((y_1,i_1),(y_2,i_2))\in X^2\;\;\;\mbox{and}\;\;\;(z_1,z_2):=((u_1,k_1),(u_2,k_2))\in X^2\backslash F.$$ Since $k_1 \not= k_2,$ it follows that
\begin{equation}
\overline{\rho}_c(\textbf{q}_j(x_1,x_2,t,\theta),(z_1,z_2))\geq c(\delta(j,k_1) +\delta(j,k_2))_\geq c \;\;\;\textrm{for} \;\;\;j \in I,\, t\geq 0.
\end{equation}
Hence, letting $\gamma\in(0,c)$, we obtain $\textbf{q}_j(x_1,x_2,t,\theta)\not\in B((z_1,z_2),\gamma)$ for all $j\in I$ and $t\geq 0$.
This implies that $\Gamma_t(x_1,x_2,B((z_1,z_2),\gamma))=0$ for all $t\geq 0$, and in turn yields  that $Q(x_1,x_2,B((z_1,z_2),\gamma))=0$. Consequently, we see that $(z_1,z_2)\in X^2\backslash \supp Q(x_1,x_2,\cdot)$,  as claimed. 

Let $\{(X^1_n,X^2_n)\}_{n\in\n_0}$ be an arbitrary Markovian coupling of $P$ with transition function $B$ such that $Q\leq B$, and define
\begin{equation}
\sigma = \inf\{n \in \n_0 : (X^1_n,X^2_n) \in F,\;\; V(X^1_n) + V(X^2_n) < R\}.
\end{equation}
Consider the Lyapunov function $\overline{V}:X^2\rightarrow [0,\infty)$ defined by
\begin{equation}
\overline{V}(x_1,x_2)=V(x_1)+V(x_2) \;\;\;\textrm{for}\;\;\; (x_1,x_2)\in X^2.
\end{equation}
Since $\overline{V}(x_1,x_2)<R$ for $(x_1,x_2)\in F$, we have
\begin{equation}
\sigma= \inf\{n \in \n_0: \overline{V}(X^1_n,X^2_n)< R\}.
\end{equation}
Moreover, in view of \eqref{eq:av}, it is easy to see that
\begin{equation}
B\overline{V} (x_1; x_2)\leq \overline{V} (x_1; x_2) + 2b\;\;\; \textrm{for}\;\;\;(x_1, x_2) \in X^2.
\end{equation}
Hence the proof of \hyperref[cnd:B2]{(B2)} is completed by using [21, Lemma 2.2].

\textbf{Step 3.} Set $q:=a=(\olambda LL_q)(\ulambda - \alpha)^{-1}$, and let \hbox{$(x_1,x_2):=((y_1,i_1),(y_2,i_2))\in F$}. Applying \hyperref[cnd:a3]{(A3)} we see that
\begin{align}
\label{st3:a}
\begin{split}
\Gamma_t \rho_c(x_1,x_2)&=\int_{\Theta}\rho _c(\textbf{q}_j(x_1,x_2,t,\theta))\textbf{p}_{\theta}(x_1,x_2,t)\Big(\sum_{j\in I}\boldsymbol{\pi}_j(x_1,x_2,t,\theta)\Big)\,d\theta\\
&\leq \int_{\Theta} \rho(q_{\theta}(S_{i_1}(t,y_1)),q_{\theta}(S_{i_2}(t,y_2)))p_{\theta}(S_{i_1}(t,y_1))\,d\theta\\
&\leq L_q \rho(S_{i_1}(t,y_1),S_{i_2}(t,y_2))\;\;\;\mbox{for}\;\;\;t\geq 0.\\ 
\end{split}
\end{align}
If $i_1\neq i_2$ then $(x_1,x_2)\in F\backslash F_1=F_2$, and thus, due to the definition of $M_{\mathcal{L}}$ given in \eqref{ml}, we have $\mathcal{L}(y_2)\leq M_{\mathcal{L}}$. Hence, it follows from \hyperref[cnd:a2]{(A2)} that
\begin{equation}
\label{st3:m}
\rho(S_{i_1}(t,y_1),S_{i_2}(t,y_2))\leq L e^{\alpha t} + tM_{\mathcal{L}}\delta(i_1,i_2)\;\;\;\mbox{for}\;\;\;t\geq 0.
\end{equation}
Combining this with \eqref{st3:a} gives
$$\Gamma_t q_c(x_1,x_2) \leq L_q(Le^{\alpha t}\rho(y_1,y_2)+t M_{\mathcal{L}}\delta(i_1,i_2))\;\;\;\mbox{for}\;\;\;t\geq 0.$$
Finally, applying \eqref{cnd:a7} and \eqref{nc}, we obtain
\begin{equation}
\begin{split}
Q q_c&(x_1,x_2)\leq\int_0^{\infty}\lambda(S_i(t,y))e^{-L(t,y,i)}\,\Gamma_t q_c(x_1,x_2)dt\\
&\leq \int_0^{\infty}\olambda e^{-\ulambda t}\,\Gamma_t q_c(x_1,x_2)dt\\
&\leq \olambda L L_q \left(\int_0^{\infty} e^{(\alpha-\ulambda)t}\,dt\,\rho(y_1,y_2)+\frac{M_{\mathcal{L}}}{L}\int_0^{\infty} t e^{\ulambda t}\,dt\,\delta(i_1,i_2)\right)\\
&\leq \frac{\olambda LL_q}{\ulambda-\alpha}(\rho(y_1,y_2)+\frac{(\ulambda-\alpha)M_{\mathcal{L}}}{\ulambda^2L}\delta(i_1,i_2))\leq q\rho _c(x_1,x_2).
\end{split}
\end{equation}

\textbf{Step 4.} Let $T\subset\left[0,\infty\right)$ be the bounded set with positive measure for which \eqref{e:remT} holds, and put $\delta:=\delta _{\pi}\delta _p\int_T\ulambda e^{-\olambda t}dt$. Using \eqref{e:remT} we obtain  
\begin{equation}
\label{st4:ea}
LL_q e^{\alpha t}\leq q \;\;\;\textrm{for}\;\;\; t\in T\;\;\mbox{(where } q=a\mbox{)}.
\end{equation}
Let $(x_1,x_2):=((y_1,i_1),(y_2,i_2))\in F$, and define
$$
U:=\{(u_1,u_2)\in X^2: \rho_c(u_1,u_2)\leq q\rho_c(x_1,x_2)\}.
$$
We will show that $Q(x_1,x_2,U)\geq \delta.$ For this purpose, let us consider the following sets:
$$R_1(t)=\{\theta \in \Theta: \rho (q_{\theta}(S_{i_1}(t,y_1)),q_{\theta}(S_{i_2}(t,y_2)))\leq L_q\rho (S_{i_1}(t,y_1),S_{i_2}(t,y_2))\},$$
$$R_2(t)=\{\theta \in \Theta: \rho (q_{\theta}(S_{i_1}(t,y_1)),q_{\theta}(S_{i_2}(t,y_2)))\leq q\rho_c(x_1,x_2)\}.$$
Observe that $R_1(t)\subset R_2(t)$ for $t\in T$. To see this, let $t\in T$ and  $\theta \in R_1(t)$. From \hyperref[cnd:a3]{(A3)} \eqref{st3:m}, \eqref{st4:ea} and \eqref{nc} it then follows that
\begin{align*}
\begin{split}
\rho (q_{\theta}(S_{i_1}(t,y_1)),q_{\theta}(S_{i_2}(t,y_2)))&\leq L_q\rho (S_{i_1}(t,y_1),S_{i_2}(t,y_2))\\
&\leq  L_q L e^{\alpha t}\rho(y_1,y_2)+L_{q}M_{\mathcal{L}}e^{\sup T}\delta(i_1,i_2)\\
&\leq q\rho(y_1,y_2)+ \frac{\olambda L L_q}{\ulambda-\alpha} \frac{(\ulambda-\alpha)M_{\mathcal{L}}}{\olambda L}  e^{\sup T}\delta(i_1,i_2)\\
&\leq q\rho(y_1,y_2)+qc\delta(i_1,i_2)=q\rho _c (x_1,x_2),
\end{split}
\end{align*}
which gives the desired inclusion. Since \hbox{$R_2(t)=\{\theta\in\Theta:\, \boldsymbol{q}_j(x_1,x_2,t,\theta,t)\in U\}$}, we therefore obtain 
$$\mathbbm{1}_U(\textbf{q}_j(x_1,x_2,t,\theta))=\mathbbm{1}_{R_2(t)}(\theta)\geq \mathbbm{1}_{R_1(t)}(\theta)\;\;\;\mbox{for}\;\;\;t\in T,\,\theta\in\Theta.$$
Hence, applying the fact that $R_1(t)=\theta(S_{i_1}(t,y_1),S_{i_2}(t,y_2))$ and \hyperref[cnd:a6]{(A6)}, we can conclude that
\begin{align*}
\begin{split}
\Gamma _t(x_1,x_2,U)&=\sum_{j\in I} \int_{\Theta}\mathbbm{1}_U(\textbf{q}_j(x_1,x_2,t,\theta))\boldsymbol{\pi}_j(x_1,x_2,t,\theta )\textbf{p}_{\theta}(x_1,x_2,t)\,d\theta\\
&\geq  \delta_{\pi}\int_{R_1(t)}\textbf{p}_{\theta}(x_1,x_2,t)d\theta =\delta_{\pi}\delta _p.
\end{split}
\end{align*}
Consequently, using \eqref{cnd:a7}, we infer that
\begin{align*}
\begin{split}
Q(x_1,x_2,U)&\geq \int_T\boldsymbol{\lambda}(x_1,x_2,t)\boldsymbol{L}(x_1,x_2,t)\Gamma_t(x_1,x_2,U)dt\\
&\geq \delta _{\pi}\delta _p\int_T\ulambda e^{-\olambda t}dt=\delta,
\end{split}
\end{align*}
which proves \hyperref[cnd:B4]{(B4)}.

\textbf{Step 5.}
What is left is to show that  \hyperref[cnd:B5]{(B5)} holds. For this purpose, let $(x_1,x_2):=((y_1,i_1),(y_2,i_2))\in F$, and define 
$$z_1(t):=S_{i_1}(t,y_1)\;\;\;\mbox{and}\;\;\;z_2(t):=S_{i_2}(t,y_2)\;\;\;\mbox{for}\;\;\;t\geq 0.$$ 
Applying the following inequality:
\begin{equation}
\label{estmin}
(u_1\wedge u_2)(v_1\wedge v_2)\geq u_1v_1-u_1|v_1-v_2|-v_1|u_1-u_2|\;\;\;(u_i,v_i\in\mathbb{R}),
\end{equation}
and keeping in mind that
$$\boldsymbol{\pi}_j(x_1,x_2,t,\theta )=\pi _{i_1j}(q_{\theta}(z_1(t)))\wedge \pi _{i_2j}(q_{\theta}(z_2(t))),$$
$$\textbf{p}_{\theta}(x_1,x_2,t)=p_{\theta}(z_1(t))\wedge p_{\theta}(z_2(t)),$$
we obtain
\begin{align}
\label{eq1:s5}
\begin{split}
\Gamma_t(x_1,x_2,X^2) &=\sum_{j\in I} \int_{\Theta}\boldsymbol{\pi}_j(x_1,x_2,t,\theta )\textbf{p}_{\theta}(x_1,x_2,t)d\theta \\
&\geq \sum_{j\in I}\int_{\Theta} \pi_{i_1j}(q_{\theta}(z_1(t)))p_{\theta}(z_1(t))\,d\theta \\
&\,\quad-\sum_{j\in I}\int_{\Theta} \pi_{i_1j}(q_{\theta}(z_1(t))) |p_{\theta}(z_1(t))-p_{\theta}(z_2(t))|\,d\theta\\
&\,\quad-\sum_{j\in I} \int_{\Theta} |\pi_{i_1j}(q_{\theta}(z_1(t)))-\pi_{i_2j}(q_{\theta}(z_2(t)))| p_{\theta}(z_1(t))\,d\theta.
\end{split}
\end{align}
Let $C_k(t)$ (where $k\in\{1,2,3\}$) denote the $k$th sum
on the right-hand side of the above inequality. Clearly, $C_1(t)=1$ for any $t\geq 0$. By condition \hyperref[cnd:a5]{(A5)} we have
$$C_2(t)=\int_{\Theta} |p_{\theta}(z_1(t))-p_{\theta}(z_2(t))|\,d\theta\leq L_p\rho(z_1(t),z_2(t)).$$ Further, from \hyperref[cnd:a5]{(A5)} and \hyperref[cnd:a3]{(A3)} it follows that
\begin{align*}
C_3(t)&\leq \int_{\Theta} \Big[\sum_{j\in I} |\pi_{i_1j}(q_{\theta}(z_1(t)))-\pi_{i_1j}(q_{\theta}(z_2(t)))| \Big]p_{\theta}(z_1(t))\,d\theta\\
&\,\quad+\delta(i_1,i_2)\int_{\Theta} \Big[\sum_{j\in I} |\pi_{i_1j}(q_{\theta}(z_2(t)))-\pi_{i_2j}(q_{\theta}(z_2(t)))| \Big]p_{\theta}(z_1(t))\,d\theta\\
&\leq L_{\pi} \int_{\Theta}\rho(q_{\theta}(z_1(t)),q_{\theta}(z_2(t)))p_{\theta}(z_1(t))\,d\theta+2\delta(i_1,i_2)\\
&\leq L_qL_{\pi}\rho(z_1(t),z_2(t))+2\delta(i_1,i_2).
\end{align*}
Consequently, from \eqref{eq1:s5} we can now conclude that
\begin{align*}
\Gamma_t(x_1,x_2,X^2)&\geq C_1(t)-C_2(t)-C_3(t)\\
&\geq 1-(L_p+L_q L_{\pi})\rho(S_{i_1}(t,y),S_{i_2}(t,y))-2\delta(i_1,i_2)\\
&\geq 1-(L_p+L_q L_{\pi}+1)\left[\rho(S_{i_1}(t,y),S_{i_2}(t,y))+2\delta(i_1,i_2)\right],
\end{align*}
which, together with \eqref{st3:m}, gives
\begin{equation}
\label{a5-a}
\Gamma_t(x_1,x_2,X^2)\geq 1-L(L_p+L_q L_{\pi}+1)\left[e^{\alpha t}\rho(y_1,y_2)+\frac{tM_{\mathcal{L}}+2}{L}\delta(i_1,i_2)\right].
\end{equation}

Let us recall that
$$\boldsymbol{\lambda}(x_1,x_2,t)=\lambda(z_1(t))\wedge\lambda(z_2(t)),$$
$$\boldsymbol{L}(x_1,x_2,t)=e^{-\int_0^t \lambda(z_1(s))\,ds} \wedge e^{-\int_0^t \lambda(z_2(s))\,ds}.$$
We now apply \eqref{estmin} again to see that
\begin{align}
\begin{split}
\label{a5-b}
\int_0^{\infty} \boldsymbol{\lambda}(x_1,&x_2,t)\boldsymbol{L}(x_1,x_2,t)\,dt\geq \int_0^{\infty} \lambda(z_1(t))e^{-\int_0^t \lambda(z_1(s))\,ds}\,dt\\
&\quad-\int_0^{\infty} \lambda(z_1(t))\left|e^{-\int_0^t \lambda(z_1(s))\,ds}-e^{-\int_0^t \lambda(z_2(s))\,ds}\right|\,dt\\
&\quad-\int_0^{\infty} e^{-\int_0^t \lambda(z_1(s))\,ds} |\lambda(z_1(t))-\lambda(z_2(t))|\,dt.
\end{split}
\end{align}
Let $I_k$ (where $k\in\{1,2,3\}$) stand for the $k$th integral
on the right-hand side of the above inequality. Since the integrand of $I_1$ is the density of the distribution \eqref{tdist}, we have $I_1=1$. In order to estimate $I_2$ we use the inequality
$$\left|e^u-e^v\right|\leq e^{\max(u,v)}|u-v|,\;\;\;u,v\in\mathbb{R}.$$
It follows that
$$
I_2\leq \int_0^{\infty} \lambda (z_1(t))e^{\max\left(-\int_0^t\lambda(z_1(s))\,ds,\;-\int_0^t\lambda(z_1(s))\,ds \right)} \left[\int_0^t |\lambda(z_1(s))-\lambda(z_2(s))|\,ds\right]dt.
$$
According to \eqref{cnd:a7} and \hyperref[cnd:a4]{(A4)} we now obtain
$$
I_2\leq \olambda L_{\lambda}\int_0^{\infty} e^{-\ulambda t} \left[\int_0^t \rho(S_{i_1}(s,y_1),S_{i_2}(s,y_2))\,ds\right]dt.\\
$$
Finally, using \hyperref[cnd:a2]{(A2)} (cf. also \eqref{st3:m}) we infer that
\begin{align*}
I_2&\leq \olambda L_{\lambda}\int_0^{\infty} e^{-\ulambda t} \left[\int_0^t 
(Le^{\alpha s}\rho(y_1,y_2)+M_{\mathcal{L}}s\,\delta(i_1,i_2))\,ds\right]\,dt\\
&\leq \olambda L_{\lambda}\int_0^{\infty} e^{-\ulambda t}\left[ \frac{L}{\alpha}(e^{\alpha t}-1)\rho(y_1,y_2)+\frac{M_{\mathcal{L}}}{2}t^2\delta(i_1,i_2)\right]dt\\
&=\olambda L_{\lambda} \left[\frac{L}{\alpha}\int_0^{\infty} (e^{(\alpha-\ulambda)t}-e^{-\ulambda t})\,dt\, \rho(y_1,y_2)+ \frac{M_{\mathcal{L}}}{2} \int_0^{\infty} t^2 e^{-\ulambda t}\,dt\,\delta(i_1,i_2)\right]\\
&=\olambda L_{\lambda}\left [\frac{L}{\ulambda(\ulambda-\alpha)}\rho(y_1,y_2)+\frac{M_{\mathcal{L}}}{\ulambda^3} \delta(i_1,i_2)\right]\\
&=\frac{\olambda LL_{\lambda}}{\ulambda(\ulambda-\alpha)}\left[\rho(y_1,y_2)+\frac{M_{\mathcal{L}}(\ulambda-\alpha)}{L\ulambda^2}\delta(i_1,i_2) \right],
\end{align*}
which, in accordance with \eqref{nc}, gives
\begin{equation}
\label{a5-b1}
I_2\leq \frac{\olambda LL_{\lambda}}{\ulambda(\ulambda-\alpha)}\rho_c(x_1,x_2).
\end{equation}
Conditions \eqref{cnd:a7}, \hyperref[cnd:a4]{(A4)} and \hyperref[cnd:a2]{(A2)} also enable us to estimate $I_3$ as follows:
\begin{align*}
I_3&\leq L_{\lambda} \int_0^{\infty}e^{-\ulambda t} \rho(S_{i_1}(t,y_1),S_{i_2}(t,y_2))\,dt\\
&\leq L_{\lambda} \int_0^{\infty} e^{-\ulambda t} (Le^{\alpha t}\rho(y_1,y_2)+M_{\mathcal{L}} t\rho(i_1,i_2))\,dt \\
&\leq L_{\lambda} \left[\frac{L}{\ulambda-\alpha}\rho(y_1,y_2)+\frac{M_{\mathcal{L}}}{\ulambda^2}\delta(i_1,i_2) \right]\\
&=\frac{LL_{\lambda}}{\ulambda-\alpha}\left[\rho(y_1,y_2)+\frac{M_{\mathcal{L}}(\ulambda-\alpha)}{L\ulambda^2}\delta(i_1,i_2) \right].
\end{align*}
Consequently, by \eqref{nc} we then have
\begin{equation}
\label{a5-b2}
I_3\leq \frac{LL_{\lambda}}{\ulambda-\alpha}\rho_c(x_1,x_2)\leq  \frac{\olambda LL_{\lambda}}{\ulambda(\ulambda-\alpha)}\rho_c(x_1,x_2).
\end{equation}
From \eqref{a5-b}, \eqref{a5-b1} and \eqref{a5-b2} it now follows that
$$\int_0^{\infty} \boldsymbol{\lambda}(x_1,x_2,t)\boldsymbol{L}(x_1,x_2,t)\,dt\geq 1-\frac{2\olambda LL_{\lambda}}{\ulambda(\ulambda-\alpha)}\rho_c(x_1,x_2).$$

Combining \eqref{a5-a} with the latter inequality and using \eqref{cnd:a7} again, we can now deduce that
\begin{align*}
Q(x_1,x_2,&X^2)=\int_0^{\infty} \boldsymbol{\lambda}(x_1,x_2,t)\boldsymbol{L}(x_1,x_2,t)\Gamma_t(x_1,x_2,X^2)\,dt\\
&\geq 1-\frac{2\olambda LL_{\lambda}}{\ulambda(\ulambda-\alpha)}\rho_c(x_1,x_2)\\
&\quad -  \int_0^{\infty}\olambda\, e^{-\ulambda t} L(L_p+L_q L_{\pi}+1)\left[e^{\alpha t}\rho(y_1,y_2)+\frac{tM_{\mathcal{L}}+2}{L}\delta(i_1,i_2)\right]\,dt.
\end{align*}
Evaluating the last term on the right-hand side of this estimation, say $I_0$, we see that
\begin{align*}
I_0&=\olambda L(L_p+L_q L_{\pi}+1)\left[\frac{1}{\ulambda-\alpha}\rho(y_1,y_2)+\left(\frac{M_{\mathcal{L}}}{L\ulambda^2}+\frac{2}{L\ulambda} \right)\delta(i_1,i_2)\right]\\
&=\frac{\olambda L(L_p+L_q L_{\pi}+1)}{\ulambda-\alpha}\left[\rho(y_1,y_2)+ \left(\frac{(\ulambda-\alpha)M_{\mathcal{L}}}{L\ulambda^2}+\frac{2(\ulambda-\alpha)}{L\ulambda} \right)\delta(i_1,i_2)\right]\\
&\geq \frac{\olambda L(L_p+L_q L_{\pi}+1)}{\ulambda-\alpha}\rho_c(x_1,x_2),
\end{align*}
where the last inequality is due to \eqref{nc}. Hence, finally, we obtain
\begin{align*}
Q(x_1,x_2,X^2)\geq 1-\left(\frac{2\olambda LL_{\lambda}}{\ulambda(\ulambda-\alpha)}+ \frac{L(L_p+L_q L_{\pi}+1)}{\ulambda-\alpha}\right)\rho_c(x_1,x_2).
\end{align*}
This establishes \hyperref[cnd:B5]{(B5)} and completes the proof of the theorem.
\end{proof}

\section{Application to a Poisson-driven stochastic differential equation} \label{sec:4}
In this part of the paper, we consider a PDSDE in the spirit of Lasota and Traple \cite{b:las_poiss}. In the case discussed here we assume that the intensity of stochastic perturbations (jumps) depends on the solution (like in \cite{b:kazak}), and that the unperturbed part of the equation is governed by a finite collection of randomly switched dynamical systems $y'(t)=a(y(t),i)$, $i\in\{1,\ldots,N\}$ (as in \cite{b:horbacz_poiss}). 

We shall focus on the Markov operator corresponding to the change of distributions of the solution process from jump to jump (that is, the jump operator). Theorem \ref{thm_main} will be used to provide sufficient conditions ensuring the exponential ergodicity of such an \hbox{operator.}

\subsection{Poisson random measure and Poisson point process}
Let us first introduce notation and recall some basic concepts (adapted mainly from \cite[\S1.7-1.9]{b:situ}) concerning Poisson random measures, which will be needed in the rest of the paper.

Suppose we are given a measurable space $(S,\Sigma_S)$, and let $(\Omega, \mathcal{F},\pr)$ be a probability space. Recall that a map $\m:\Sigma_S\times\Omega\to\left[0,\infty\right]$ is called a \emph{random measure} if, for any $A\in\Sigma_S$, $\m(A,\cdot):\Omega\to \left[0,\infty\right]$ is a random variable, and for any $\omega\in\Omega$, $\m(\cdot,\omega):\Sigma_S\to\left[0,\infty\right]$ is a $\sigma$-finite measure. In what follows, we sometimes identify $\m$ with the map $\overline{\m}:\Omega\to \overline{\M}(S)$ given by $\overline{\m}(\omega)(A):=\m(\omega, A)$ for $\omega\in\Omega$ and $A\in\Sigma_S$, where $\overline{\M}(S)$ denotes the set of all non-negative $\sigma$-finite measures on~$\Sigma_S$.

A random measure $\m:\Sigma_S\times\Omega\to\left[0,\infty\right]$ is said to be a \emph{Poisson random measure} with intensity $\lambda_\m:\Sigma_S\to\left[0,\infty\right]$ whenever
\begin{itemize}
\item[(i)] for each $A\in\Sigma_S$, the random variable $\m(A,\cdot)$ is Poisson distributed with mean $\ew \left[\m(A,\cdot)\right]=\lambda_\m(A)$, i.e. $\pr(\m(A,\cdot)=k)=(\lambda_\m(A)^k/k!)e^{-\lambda_\m(A)}$ for every $k\in\n_0$;
\item[(ii)] if $A_1,\ldots,A_n\in\Sigma_S$ are disjoint sets then $\m(A_1,\cdot),\ldots,\m(A_n,\cdot)$ are \hbox{mutually} independent.
\end{itemize}
In the above definition, we adopt the convention that $0\cdot\infty:=0$.
Thus, if \hbox{$\lambda_\m(A)=\infty$}, then $\pr(\m(A,\cdot)=k)=0$ for all $k\in\mathbb{N}_0$, whence $\m(A,\cdot)=0$ almost everywhere.

Let us now consider a measurable space $(\Theta, \Sigma_{\Theta})$, and define $S_{\Theta}:=\mathbb{R}_+\times\Theta$, where $\mathbb{R}_+:=\left[0,\infty\right)$. We endow $S_{\Theta}$ with the product $\sigma$-field $\Sigma_{S_{\Theta}}:=\mathcal{B}(\mathbb{R}_+)\otimes\Sigma_{\Theta}$.

A mapping $p:D_p\to \Theta$ is called a \emph{point function} valued on $\Theta$ whenever $D_p$ is a countable subset of $(0,\infty)$. Let $\Pi(\Theta)$ denote the set of all point functions valued on $\Theta$. Every $p\in \Pi(\Theta)$ defines a counting measure \hbox{$N_p: \Sigma_{S_{\Theta}} \to \n_0\cup\{\infty\}$} specified by
\begin{align*}
N_p(\left[0,t\right]\times A)&:=\card\{s\in D_p:\,s\leq t,\; p(s)\in A\}\\
&=\sum_{s\in D_p} \mathbbm{1}_{\left[0,t\right]\times A}(s,p(s))\;\;\;\mbox{for}\;\;\; t\geq 0,\,A\in\Sigma_{\Theta}.
\end{align*}

For any given function $\p:\Omega\to \Pi(\Theta)$, let us now define $\Np:\Sigma_{S_{\Theta}}\times \Omega \to\n\cup\{\infty\}$ by $$\Np(B,\omega):=N_{\p(\omega)}(B)\;\;\;\mbox{for}\;\;\;B\in\Sigma_{S_{\Theta}},\,\omega\in\Omega,$$ which can as well be viewed as the map $\Np:\Omega \to \overline{\M}(S_{\Theta})$ determined by $\Np(\omega):=N_{\p(\omega)}$ for $\omega\in\Omega$. To simplify notation, for $t\geq 0$ and $A\in\Sigma_\Theta$, we will often write $\Np(t,A)$ instead of $\Np(\left[0,t\right]\times A)$.

A~map $\p:\Omega\to \Pi(\Theta)$ is said to be a \emph{(Poisson) point process} if $\Np$ is a (Poisson) random measure. In this case, $\Np$ is called a \emph{(Poisson) random counting measure}. 

For a Poisson point process $\p$, by its intensity we mean the intensity of the Poisson random measure $\Np$, i.e. $n_{\p}(B):=\ew\left[\Np(B)\right]$ for $B\in\Sigma_{S_{\Theta}}$. If $n_{\p}$ satisfies
$$n_{\p}(\left[0,t\right]\times A)=t\kappa(A)\;\;\;\mbox{for}\;\;\;t>0,\,A\in  \Sigma_{\Theta},$$
where $\kappa$ is some non-negative measure on $\Sigma_{\Theta}$, then $\p$ is called a \emph{stationary Poisson process}, and $\kappa$ is said to be the \emph{characteristic measure} of $\p$.

Let us now quote \cite[Corollary 55]{b:situ} together with a useful statement extracted from the proof of \cite[Theorem 54]{b:situ} (cf. also \cite[\S8-9]{b:ikeda} and \cite{b:kazak}).
\begin{thm}\label{thm:ex_poiss}
Let $\kappa:\Sigma_{\Theta}\to\left[0,\infty\right]$ be a $\sigma$-finite measure. Then, on some probability space $(\Omega,\mathcal{F},\pr)$, there exists a stationary Poisson point process \hbox{$\p:\Omega\to\Pi(\Theta)$} with the characteristic measure $\kappa$. In the case where $\kappa$ is a finite measure, the appropriate $\p$ can be defined so that, for any $\omega\in\Omega$, \hbox{$\p(\omega):D_{\p(\omega)}\to\Theta$} is given by
$$\p(\omega)(\btau_n(\omega)):=\eta_n(\omega),\;\;\;D_{\p(\omega)}=\{\btau_n(\omega):n\in\n\},$$
where
\begin{itemize}
\item[(i)] $\btau_n:\Omega\to \left[0,\infty\right)$, $n\in\n$, forms a strictly increasing sequence of random variables with $\tau_n\to\infty$, whose increments $\Delta\btau_{n}:=\btau_{n}-\btau_{n-1}$, where $\btau_0:=0$, are mutually independent and have the same exponential distribution with rate $\kappa(\Theta)$;
\item[(ii)] $\eta_n:\Omega\to\Theta$, $n\in\n$, forms a sequence of   mutually independent and identically distributed random variables with the common distribution $\kappa/\kappa(\Theta)$, such that the sequences $(\eta_n)_{n\in\n}$ and $(\btau_n)_{n\in\n}$ are independent.
\end{itemize}
In particular, the Poisson random counting measure corresponding to $\p$ takes then the form
\begin{equation}
\label{count_form}
\Np(t,A)=\sum_{n=1}^{\infty} \mathbbm{1}_{\{\btau_n\leq t,\,\eta_n\in A\}}\;\;\;\mbox{for}\;\;\;t\geq 0,\,A\in\Sigma_{\Theta},
\end{equation}
and $\Np(t,A)<\infty$ a.s. for any $t\geq 0$ and $A\in \Sigma_{\Theta}$.
\end{thm}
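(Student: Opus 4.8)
The plan is to recognise this as the classical construction of a marked homogeneous Poisson process and to verify the Poisson--random--measure axioms through the Laplace functional. First I would reduce the general $\sigma$-finite case to the finite one: writing $\Theta=\bigcup_{k\in\n}\Theta_k$ as a disjoint union with $\kappa(\Theta_k)<\infty$, I would build independent point processes $\p^{(k)}$ on $\R_+\times\Theta_k$ with characteristic measures $\kappa|_{\Theta_k}$ (by the finite construction below) and superpose them, $\Np:=\sum_{k}\mathbf{N}_{\p^{(k)}}$. Since independent Poisson random measures superpose to a Poisson random measure with the summed intensity and $\sum_k\kappa|_{\Theta_k}=\kappa$, this yields the general statement; the substantive work is therefore the finite-measure construction, which is also the part the paper actually uses.

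So assume $0<\lambda_0:=\kappa(\Theta)<\infty$ (the case $\kappa\equiv 0$ being trivial, with $\Np\equiv 0$). On a product probability space I take iid exponential increments $\Delta\btau_n$ of rate $\lambda_0$ and, independently, iid marks $\eta_n$ with law $\kappa/\lambda_0$ (their joint existence following from the product-measure, i.e. Kolmogorov, construction). Setting $\btau_n:=\Delta\btau_1+\dots+\Delta\btau_n$, the sequence is a.s. strictly increasing (positive increments) and $\btau_n\to\infty$ a.s. (by the strong law, $\btau_n/n\to 1/\lambda_0>0$). I then define $\p$ and $\Np$ by the stated formulas. The time-marginal $\Np(t,\Theta)=\card\{n:\btau_n\le t\}=:N(t)$ is the standard rate-$\lambda_0$ Poisson counting process, so in particular $\Np(t,A)\le N(t)<\infty$ a.s.

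It remains to check that $\Np$ is a Poisson random measure with intensity $n_\p=\mathrm{Leb}\otimes\kappa$, which I would establish via its Laplace functional. For bounded measurable $f:\R_+\times\Theta\to\R_+$ supported in $\left[0,T\right]\times\Theta$, I condition on $N(T)=k$ and use the order-statistics property --- given $N(T)=k$, the times $\btau_1,\dots,\btau_k$ are distributed as $k$ iid uniforms on $\left[0,T\right]$, independently of the marks $\eta_1,\dots,\eta_k$ --- to obtain
$$\ew\Bigl[\exp\Bigl(-\sum_{n:\,\btau_n\le T}f(\btau_n,\eta_n)\Bigr)\,\Big|\,N(T)=k\Bigr]=\Bigl(\frac{1}{\lambda_0 T}\int_0^T\!\!\int_{\Theta}e^{-f(t,\theta)}\,\kappa(d\theta)\,dt\Bigr)^{k}.$$
Averaging against the $\mathrm{Poisson}(\lambda_0 T)$ law of $N(T)$ and using $\lambda_0=\kappa(\Theta)$, the resulting series sums to
$$\ew\Bigl[\exp\Bigl(-\int f\,d\Np\Bigr)\Bigr]=\exp\Bigl(-\int_0^{\infty}\!\!\int_{\Theta}\bigl(1-e^{-f(t,\theta)}\bigr)\,\kappa(d\theta)\,dt\Bigr),$$
which is the Laplace functional of the Poisson random measure with intensity $\mathrm{Leb}\otimes\kappa$. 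Choosing $f=\sum_i s_i\mathbbm{1}_{B_i}$ with disjoint $B_i$ reads off simultaneously that each $\Np(B_i)$ is Poisson with mean $n_\p(B_i)$ and that these variables are independent, i.e. properties (i)--(ii); since $n_\p(\left[0,t\right]\times A)=t\kappa(A)$, the process is stationary with characteristic measure $\kappa$.

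The main obstacle is precisely the order-statistics description of the arrival times conditional on the count, together with the measure-theoretic care needed to interchange the (a.s. finite) sum and the expectation in the Laplace computation; this step is exactly the content of the marking (colouring) theorem for Poisson processes. Once it is in place, the a.s. divergence of $\btau_n$, local finiteness, and the superposition argument are routine --- which is why the statement can be quoted directly from \cite{b:situ} rather than reproved in full here.
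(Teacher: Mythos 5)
Your argument is essentially correct, but note that the paper itself contains no proof of this statement to compare against: Theorem \ref{thm:ex_poiss} is quoted verbatim from \cite[Corollary 55]{b:situ}, together with the explicit description (i)--(ii) extracted from the proof of \cite[Theorem 54]{b:situ}. What you have done is reconstruct the standard argument that lies behind that citation, and your route is the natural one: the finite-measure construction (partial sums of i.i.d.\ exponential interarrival times $\btau_n$ with rate $\kappa(\Theta)$, decorated by independent i.i.d.\ marks $\eta_n$ with law $\kappa/\kappa(\Theta)$) is precisely the object described in (i)--(ii), so the substance is verifying that it is a stationary Poisson point process, which you do via the Laplace functional, and the $\sigma$-finite case follows by superposition. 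Three small points deserve tightening, though none is a genuine gap. First, conditionally on $N(T)=k$ the arrival times $\btau_1,\dots,\btau_k$ are the \emph{order statistics} of $k$ i.i.d.\ uniforms on $\left[0,T\right]$, not i.i.d.\ uniforms themselves; your product formula nevertheless holds because the marks are i.i.d.\ and independent of the times, so by exchangeability the sum $\sum_{n\leq k} f(\btau_n,\eta_n)$ has the same law as the corresponding sum over unordered uniform times --- this one-line justification should be stated. Second, in the superposition step you should observe that, almost surely, no two component processes $\p^{(k)}$ share an atom time (their jump times have continuous, independent laws), so the superposed object is again a point function valued on $\Theta$ and its counting measure is the sum of the component counting measures. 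Third, your Laplace-functional identification covers sets of finite intensity measure; for $B$ with $(\mathrm{Leb}\otimes\kappa)(B)=\infty$ one needs a monotone-convergence step ($B\cap(\left[0,T\right]\times\Theta)\uparrow B$) to conclude $\Np(B)=\infty$ a.s., consistent with the convention adopted in the paper. With these routine additions your proposal is a complete, self-contained proof of the quoted result.
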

If a random counting measure $\Np$ has the form \eqref{count_form}, then, for any given $A\in\Sigma_{\Theta}$, the variables $\overline{\tau}_n$ are called \emph{jump times} of $(\Np(t,A))_{t\geq 0}$. 

Suppose now that we are given a Banach space $(H,\norma{\cdot})$ and a point process \hbox{$\p:\Omega\to\Pi(\Theta)$}. Assume that $(\Omega,\mathcal{F},\pr)$ is equipped with a filtration $\{\mathcal{F}_t(\p)\}_{t\geq 0}\subset\mathcal{F}$ such that $\Np(t,A)$ is $\mathcal{F}_t(\p)$-measurable for every $A\in \Sigma_{\Theta}$. Further, let $\mathcal{G}(\p)$ denote the family of all functions $F:\left[0,\infty\right)\times\Theta\times\Omega\to H$ such that
\begin{itemize}
\item[(a)] for each $(\theta,\omega)\in \Theta\times\Omega$, the map $t\mapsto F(t,\theta,\omega)$ is right-continuous,
\item[(b)] for each $t>0$, the map $\Theta\times\Omega \ni (\theta,\omega)\mapsto F(t,\theta,\omega)\in Y$ is $\mathcal{B}(H)\times \mathcal{F}_t(\p)$-measurable.
\end{itemize}
Given any $F\in \mathcal{G}(\p)$ and $t>0$ such that
$$\sum_{\{s\in D_{\p(\omega)}:\,s\leq t\}} \norma{F(s,\p(\omega)(s),\omega)}<\infty\;\;\;\mbox{for almost all} \;\;\;\omega\in\Omega,$$
we can define the integral of $F$ with respect to $\Np$ by setting
$$\int_0^t \int_{\Theta} F(s,\theta,\cdot)\,\Np(ds,d\theta):=\sum_{\{s\in D_{\p(\cdot)}:\,s\leq t\}} F(s,\p(\cdot)(s),\cdot)\;\;\;\mbox{a.e.}$$
Clearly, $I_t$ is a random variable, due to (b).

Finally, let us define the integral with respect to $\Np(\Lambda(ds),d\theta)$, where $(\Lambda(t))_{t\geq 0}$ is a real-valued stochastic process with strictly increasing trajectories, such that $\Lambda(0)=0$ and $D_{\p(\omega)}\subset \{\Lambda(t)(\omega):\,t>0\}$ for all $\omega\in\Omega.$ For this purpose, consider the map $\p_{\Lambda}:\Omega\to\Pi(\Theta)$ with $\p_{\Lambda}(\omega):D_{\p_{\Lambda}(\omega)}\to\Theta$ given by
\begin{equation}
\label{p_lambda}
\p_{\Lambda}(\omega)(\widehat{s}):=\p(\omega)(\Lambda(\widehat{s})(\omega)),\;\;\;\widehat{s}\in D_{\p_{\Lambda}(\omega)}:=\bigcup_{s\in D_{\p(\omega)}}\{\widehat{s}>0:\,\Lambda(\widehat{s})(\omega)=s\}
\end{equation}
for every $\omega\in\Omega$. It then follows that
\begin{align*}
\Np(\Lambda(t),A)&=\card\{s\in D_{\p}:\,s\leq \Lambda(t),\;\p(s)\in A\}\\
&=\card\{\widehat{s}\in D_{\p_{\Lambda}}:\,\Lambda(\widehat{s})\leq \Lambda(t),\;\p(\Lambda(\widehat{s}))\in A\}\\
&=\card\{\widehat{s}\in D_{\p_{\Lambda}}:\, \widehat{s}\leq t,\; \p_{\Lambda}(\widehat{s})\in A\}=\Npw(t,A),
\end{align*}
and thus, for $F\in\mathcal{G}(\p_{\Lambda})$, it is natural to define
\begin{align}
\label{defint}
\int_0^t \int_{\Theta} F(s,\theta,\cdot)\,\Np(\Lambda(ds),d\theta)&:=\sum_{\{\widehat{s}\in D_{\p_{\Lambda}(\cdot)}:\,\widehat{s}\leq t\}} F(\widehat{s},\p_{\Lambda}(\cdot)(\widehat{s}),\cdot)\;\;\;\mbox{a.e.}
\end{align}

\subsection{Model description and assumptions} \label{sub:p2}
We can now give a formal description for the aforementioned model. Let $(H,\<\cdot|\cdot\>)$ be a separable Hilbert space endowed with the norm $\norma{\cdot}$ induced by the inner product $\<\cdot|\cdot\>$, and let $Y$ be a non-empty closed subset of~$H$. We assume that $Y$ is endowed with the metric generated by $\norma{\cdot}$. Further, consider a finite set $I=\{1,\ldots,N\}$ and a matrix of continuous functions $\pi_{ij}:Y\to\left[0,1\right]$, $i,j\in I$, satisfying \eqref{stoch_pi}. Finally, let $\Theta$ be an arbitrary (and fixed) compact interval, and define the probability measure $\kappa:\B(\Theta)\to \left[0,1\right]$ of the form
$$\kappa(A):=\int_A h(\theta)\,d\theta,\;\;A\in \mathcal{B}(\Theta),$$ where $h:\Theta\to\left[0,\infty\right)$ is a continuous probability density function.

Moreover, assume that we are given three maps $\sigma:Y\times\Theta\to Y$, $\lambda: Y\to\left(0,\infty\right)$ and $a:Y\times I\to Y$ such that  the following statements hold:
\begin{itemize}
\phantomsection
\label{cond:P0}
\item[(P0)] $\sigma$ is continuous and $\lambda$ satisfies \eqref{cnd:a7};
\label{cond:P1}
\item[(P1)] There exists $y_*\in Y$ such that
$$\int_{\Theta}\norma{\sigma(y_*,\theta)}\,\kappa(d\theta)<\infty.$$
\phantomsection
\label{cond:P2}
\item[(P2)] There exists $L_{\sigma}>0$ such that
$$\int_{\Theta}\norma{\sigma(y_1,\theta)-\sigma(y_2,\theta)}\kappa(d\theta)\leq L_{\sigma}\norma{y_1-y_2}\;\;\;\mbox{for}\;\;\; y_1,y_2\in Y.$$

\phantomsection
\label{cond:P3}
\item[(P3)] The maps $a(\cdot,i)$, $i\in I$, are bounded on bounded sets and satisfy the following conditions:
\begin{itemize}
\item[(P3.1)] There exists a (negative) constant $\alpha<\ulambda-(1+L_{\sigma})\olambda$ such that each $a(\cdot,i)$ is \hbox{$\alpha$-dissipative}, that is, for every $i\in I$ and any $y_1,y_2\in Y,$
$$\<a(y_1,i)-a(y_2,i)\,|\,y_1-y_2\>\leq \alpha\norma{y_1-y_2}^2;$$
\item[(P3.2)] There exists $T>0$ such that $Y\subset \range(\id_Y-t a(\cdot,i))$ for all $t\in(0,T)$ and every $i\in I$.
\end{itemize}

\phantomsection
\label{cond:P4}
\item[(P4)] There exists $L_{\lambda}>0$ such that
$$|\lambda(y_1)-\lambda(y_2)|\leq L_{\lambda} \norma{y_1-y_2}\;\;\;\mbox{for}\;\;\;y_1,y_2\in Y.$$

\phantomsection
\label{cond:P5}
\item[(P5)]
There exists $L_{\pi}>0$ such that
$$
\sum_{j\in I}|\pi_{ij}(y_1)-\pi_{ij}(y_2)|\leq L_{\pi}\norma{y_1-y_2} \;\;\;\textrm{for}\;\;\; y_1, y_2 \in Y,\,i\in I.\\
$$

\phantomsection
\label{cond:P6}
\item[(P6)] There exist $\delta _{\pi}>0$ and $\delta _h>0$ such that
\begin{equation} 
\begin{split}
& \sum_{j\in I}\min\{\pi_{i_1j}(y_1),  \pi_{i_2j}(y_2)  \}\geq \delta_{\pi} \;\;\;\textrm{for}\;\;\; y_1, y_2  \in Y,\; i_1,i_2\in I;\\
&\kappa (\{\theta\in\Theta:\, \norma{\sigma(y_1,\theta)-\sigma(y_2,\theta)}\leq \norma{y_1-y_2}\})\geq \delta_h \;\;\;\textrm{for}\;\;\; y_1, y_2  \in Y.
\end{split}
\end{equation}
\end{itemize}

Let us now consider the stochastic differential equation
\begin{equation}
\label{SDE1}
dY(t)=a(Y(t),\xi(t))\,dt+\int_{\Theta}\sigma(Y(t),\theta)\,\Np(\Lambda(dt),d\theta)\\
\end{equation}
with intial condition \noeqref{SDE2}
\begin{equation}
\label{SDE2}
Y(0)=Y_0
\end{equation}
for an unknown process  $\{Y(t)\}_{t\geq 0}$ with values in $Y$, where 
\begin{align}
\begin{split}
\label{SDE3}
&\Lambda(t)=\int_0^t \lambda(Y(s))\,ds,\\
&\xi(t)=\xi_n\;\;\;\mbox{if}\;\;\;\Np(\Lambda(t),\Theta)=n,\;\;n\in\n_0,
\end{split}
\end{align}
for $t\geq 0$, and $Y_0$, $\xi_0$, $\p$ and $\{\xi_n\}_{n\in\n}$ are defined on a suitable probability space $(\Omega,\mathcal{F},\pr)$ as follows:
\begin{itemize} 
\item[\sbullet] $\xi_0:\Omega \to I$ and $Y_0:\Omega\to Y$ are random variables with an arbitrary (and fixed) distributions;
\item[\sbullet] $\p:\Omega\to \Pi(\Theta)$ is a stationary Poisson process with the characteristic measure $\kappa$. According to Theorem \ref{thm:ex_poiss} we can assume that $\p$ is determined by two sequences $\{\btau_n\}_{n\in\n_0}$ and $\{\eta_n\}_{n\in\n}$ of random variables satisfying conditions (i) and (ii) (given in that theorem), in the sense that
\begin{equation}
\label{jump_btau}
\p(\omega)(\btau_n(\omega))=\eta_n(\omega)\;\;\;\mbox{for}\;\;\;\omega\in\Omega.
\end{equation}
In particular, $\{\eta_n\}_{n\in\n}$ is then a sequence of $\Theta$-valued mutually independent random variables with the same density $h$;
\item[\sbullet] $\xi_0:\Omega\to I$ is a random variable with  an arbitrary (and fixed) distribution, and $\{\xi_n\}_{n\in\n}$ is a sequence of $I$-valued random variables defined so that
$$\pr(\xi_{n}=j\;|\;Y(\tau_n)=y,\, \xi_{n-1}=i)=\pi_{ij}(y)\;\;\;\mbox{for}\;\;\;i, j\in I,\;y\in Y,$$ where $\tau_n:\Omega\to \left[0,\infty\right)$, $n\in\n$, are the jump times of $\Npw$, determined by \eqref{p_lambda}, that is
\begin{equation}
\label{def_tau}
\Lambda(\tau_n)=\btau_n \;\;\;\mbox{for}\;\;\;n\in\n_0.
\end{equation}
\end{itemize}

By a solution of \eqref{SDE1}-\eqref{SDE3} we mean a c\`adl\`ag process $\{Y(t)\}_{t\geq 0}$, taking values in $Y$, such that
\begin{equation}
\label{sol_def}
Y(t)=Y_0+\int_0^t a(Y(s),\xi(s))\,ds+\int_0^t\int_{\Theta}\sigma(Y(s-),\theta)\,\Np(\Lambda(ds),d\theta),
\end{equation}
where $\{\Lambda(t)\}_{t\geq 0}$ and $\{\xi(t)\}_{t\geq 0}$ are determined by \eqref{SDE3}. Clearly, due to \eqref{def_tau}, $\{\xi(t)\}_{t\geq 0}$ can be equivalently written as
\begin{equation}
\xi(t)=\xi_n\;\;\;\mbox{for}\;\;\;t\in\left[\tau_n,\tau_{n+1}\right), \;\;\;n\in\n_0.
\end{equation}

Having in mind the definition of $\p_{\Lambda}$, given in \eqref{p_lambda}, and applying \eqref{jump_btau} and \eqref{def_tau}, we see that
$$D_{\p_{\Lambda}(\omega)}=\{\tau_n(\omega):\,n\in\n\},$$
$$\p_{\Lambda}(\omega)(\tau_n(\omega))=\p(\omega)(\btau_n(\omega))=\eta_n(\omega)\;\;\;\mbox{for}\;\;\;\omega\in\Omega,\,n\in\n.$$
Consequently, using \eqref{defint} for $F(s,\theta,\omega):=\sigma(Y(s-)(\omega),\theta)$, we obtain
\begin{align}
\label{int_jumps}
\begin{split}
\int_0^t\int_{\Theta}\sigma(Y(s-),\theta)\,\Np(\Lambda(ds),d\theta)
&=\sum_{\{\widehat{s}\in D_{\p_{\Lambda}}:\,\,\widehat{s}\leq t\}} \sigma(Y(\widehat{s}-),\p_{\Lambda}(\widehat{s}))\\
&=\sum_{\{n\in\n:\,\,\tau_n\leq t\}} \sigma(Y(\tau_n-),\eta_n).
\end{split}
\end{align}

For each $i\in I$, let us now consider the Cauchy problem of the form
\begin{equation} \label{df_v} v'(t)=a(v(t),i),\,\;\;\;v(0)=y\;\;\;\mbox{where}\;\;\;y\in Y.\end{equation}
From condition \hyperref[cond:P3]{(P3)}, \cite[Corollary 5.4]{b:kazufami} and \cite[Theorem 5.11]{b:kazufami} it follows that there exists a semiflow $S_i:\mathbb{R}_+\times I\to\mathbb{R}$ satisfying

\begin{equation}
\label{S_prop1}
\norma{S_i(t,y_1)-S_i(t,y_2)}\leq e^{\alpha t}\norma{y_1-y_2}\;\;\;\mbox{for}\;\;\;y_1,y_2\in Y,
\end{equation}
\begin{equation}
\label{S_prop2}
\norma{S_i(t,y)-y}\leq t\norma{a(y,i)}\;\;\;\mbox{for}\;\;\;y\in Y,
\end{equation}
such that, for any $y\in Y$, the map $t\mapsto S_i(t,y)$ is the unique solution {of \eqref{df_v}}. 

We will show that the solution of \eqref{SDE1}-\eqref{SDE3} is given by
\begin{equation}
\label{sol_SDE1}
Y(t):=S_{\xi_n}(t-\tau_n,Y(\tau_n))\;\;\;\mbox{for}\;\;\;t\in\left[\tau_n,\tau_{n+1}\right),
\end{equation}
where
\begin{equation}
\label{sol_SDE2}
Y(\tau_n):=Y(\tau_n-)+\sigma(Y(\tau_n-),\eta_n).
\end{equation}
For this purpose, let us denote the right-hand side of \eqref{sol_def} by $U(t)$, i.e. 

\begin{equation}
U(t):=Y_0+\int_0^{t} a(Y(s),\xi(s))\,ds+\int_0^{t}\int_{\Theta}\sigma(Y(s-),\theta)\,\Np(\Lambda(ds),d\theta).
\end{equation}
We first observe that $U(\tau_n)=Y(\tau_n)$ for any $n\in\n$. To see this, suppose that such an equality holds for an arbitrary, but fixed $n$. Applying \eqref{int_jumps} and the fact that $U(\tau_n)=Y(\tau_n)$, we obtain
\begin{align*}
&U(\tau_{n+1})=Y(\tau_n)+\int_{\tau_n}^{\tau_{n+1}} a(Y(s),\xi(s))\,ds+\sigma(Y(\tau_{n+1}-),\eta_{n+1}).
\end{align*}
The substitution $u=s-\tau_n$ gives
\begin{align*}
\int_{\tau_n}^{\tau_{n+1}} a(Y(s),\xi(s))&\,ds=\int_0^{\Delta \tau_{n+1}} a(S_{\xi_n}(u,Y(\tau_n)),\xi_n)\,du\\
&=\int_0^{\Delta \tau_{n+1}} \frac{d}{du} S_{\xi_n}(u,Y(\tau_n))\,du\\
&=S_{\xi_n}(\Delta \tau_{n+1},Y(\tau_n))-S_{\xi_n}(0,Y(\tau_n))=Y(\tau_{n+1}-)-Y(\tau_n),
\end{align*}
which implies that
$$U(\tau_{n+1})=Y(\tau_{n+1}-)+\sigma(Y(\tau_{n+1}-),\eta_{n+1})=Y(\tau_{n+1}).$$
Now, letting $n\in\n$ and $t\in\left[\tau_n,\tau_{n+1}\right)$, we can conclude that
\begin{align*}
U(t)&=U(\tau_n)+\int_{\tau_n}^t a(Y(s),\xi(s))\,ds=Y(\tau_n)+\int_{\tau_n}^t a(S_{\xi_n}(s-\tau_n, Y(\tau_n)),\xi_n)\,ds\\
&=Y(\tau_n)+\int_0^{t-\tau_n} a(S_{\xi_n}(u, Y(\tau_n)),\xi_n)\,du\\
&=Y(\tau_n)+\int_0^{t-\tau_n} \frac{d}{du}S_{\xi_n}(u, Y(\tau_n))\,du=S(t-\tau_n,Y(\tau_n))=Y(t),
\end{align*}
where the first equality follows from \eqref{int_jumps}.

\subsection{Exponential ergodicity of the jump operator associated with the PDSDE}
Let $\{Y(t)\}_{t\geq 0}$ be the solution of \eqref{SDE1}-\eqref{SDE3} specified by \eqref{sol_SDE1}.  We are concerned with the sequence of random variables given by the post-jump locations of the process $\{(Y(t),\xi(t))\}_{t\geq 0}$, that is, $\{(Y_n,\xi_n)\}_{n\in\n_0}$, wherein \hbox{$Y_n:=Y(\tau_n)$} is determined by \eqref{sol_SDE2}. If we define  
\hbox{$q_{\theta}: Y\to Y$ by}
$$q_{\theta}(y):=y+\sigma(y,\theta)\;\;\;\mbox{for}\;\;\;y\in Y,\,\theta\in \Theta,$$
then, due to \eqref{sol_SDE1} and \eqref{sol_SDE2}, we can write
$$Y_n=q_{\eta_n}(Y(\tau_n-))=q_{\eta_n}(S_{\xi_{n-1}}(\Delta \tau_n,Y_{n-1}))\;\;\;\mbox{for}\;\;\;n\in\n.$$

There is no loss of generality in assuming that $Y_0$, $\xi_0$, $\{\btau_n\}_{n\in\n}$, $\{\eta_n\}_{n\in\n}$ and $\{\xi_n\}_{n\in\n}$ satisfy the independence conditions detailed in Section \ref{sec:3}. In that case $\{(Y_n,\xi_n)\}_{n\in\n_0}$ is a time-homogeneus Markov chain with values in $X:=Y\times I$, whose transition law has the form  \eqref{def:stP} with $p_{\theta}(\cdot)\equiv h(\theta)$, $\theta\in\Theta$. To see this, it suffices to  show that the conditional distribution of $\tau_n$ is determined by \eqref{tdist}. For this purpose, define $H:\mathbb{R}_+\times Y\times I\to\mathbb{R}_+$ so that $H(\cdot,y,i)$ is the inverse of $L(\cdot,y,i)$ for every $(y,i)\in Y\times I$. Then
\begin{equation}
\label{eh}
\Delta \tau_{n+1}=H(\Delta\btau_{n+1},Y_n,\xi_n)\;\;\;\mbox{for}\;\;\;n\in\n_0,
\end{equation}
since
\begin{align*}
L(\Delta\tau_{n+1},Y_n,\xi_n)&=\int_0^{\Delta\tau_{n+1}}\lambda (S_{\xi_n}(s,Y_n))\,ds=\int_{\tau_n}^{\tau_{n+1}} \lambda (S_{\xi_n}(u-\tau_n,Y_n))\,du\\
&=\int_{\tau_n}^{\tau_{n+1}} \lambda (Y(u))\,du=\Lambda(\tau_{n+1})-\Lambda(\tau_n)=\Delta\btau_{n+1},
\end{align*}
where the last equality follows from \eqref{def_tau}. Using this, we obtain
\begin{align*}
\pr(\Delta \tau_{n+1}\leq t|Y_n=i,\,\xi_n=y)&=\pr(H(\Delta\btau_{n+1},Y_n,\xi_n)\leq t|Y_n=i,\,\xi_n=y)\\
&=\pr(\Delta\btau_{n+1}\leq L(t,y,i))=1-e^{-L(t,y,i)},
\end{align*}
which is the desired conclusion.

It is now straightforward to establish the exponential ergodicity of the Markov chain $\{(Y_n,\xi_n)\}_{n\in\n_0}$ by the use of Theorem \ref{thm_main}.
\begin{thm}
Suppose that the functions  $\sigma:Y\times\Theta\to Y$, $\lambda: Y\to\left(0,\infty\right)$ and \hbox{$a:Y\times I\to Y$} satisfy conditions \hyperref[cond:P0]{(P0)}-\hyperref[cond:P6]{(P6)}, and let $\{(Y_n,\xi_n)\}_{n\in\n_0}$ be the Markov chain given by the post-jump locations of the process $\{(Y(t),\xi(t))\}_{t\geq 0}$ specified by \eqref{SDE1}-\eqref{SDE3}. Further, let $P$ be the Markov operator corresponding to $\{(Y_n,\xi_n)\}_{n\in\n_0}$. Then, for a sufficiently large $c$, the operator $P$ has a unique invariant probability measure $\mu_*\in\Mp$, which is exponentially attracting. More precisely, $\mu_*\in\Mp^{\rho_c,1}(X)$ and there exists $x_*\in X$, $C\in\mathbb{R}$ and $\beta\in\left[0,1\right)$ such that
$$
\norma{\mu P^n-\mu_*}_{FM}\leq C \beta^n(\int_X\rho_c(x_*,x)(\mu+\mu_*)(dx)+1)
$$
for any $\mu\in\Mp^{\rho_c,1}(X)$ and any $n\in\n$, where $\rho_c$ is given by \eqref{def:roc}.
\end{thm}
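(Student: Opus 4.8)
The plan is to recognise the present jump chain as a special case of the post-jump chain studied in Section \ref{sec:3} and then apply Theorem \ref{thm_main} verbatim. All the structural identifications are already in place: the semiflows $S_i$ are the solution flows of \eqref{df_v}, which by \eqref{S_prop1} and \eqref{S_prop2} are non-expansive up to the factor $e^{\alpha t}$ and grow at most linearly; the jump maps are $q_\theta(y)=y+\sigma(y,\theta)$; the switching matrix $\pi_{ij}$ and the intensity $\lambda$ are carried over unchanged; and the jump densities are position-independent, $p_\theta(\cdot)\equiv h(\theta)$. The discussion preceding the statement has already checked that $\{(Y_n,\xi_n)\}_{n\in\n_0}$ is then a time-homogeneous Markov chain with transition law \eqref{def:stP} and inter-jump law \eqref{tdist}. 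Hence it remains only to derive \hyperref[cnd:a1]{(A1)}--\hyperref[cnd:a6]{(A6)} and the spectral condition \eqref{cnd:LL} from \hyperref[cond:P0]{(P0)}--\hyperref[cond:P6]{(P6)}, with the constants $L=1$ and $L_q=1+L_\sigma$.

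Most hypotheses follow with transparent choices of constants. For \hyperref[cnd:a2]{(A2)} I take $L=1$: the contraction \eqref{S_prop1} gives the case $i=j$ with exponent $\alpha$, while for $i\neq j$ the triangle inequality $\|S_i(t,y_2)-S_j(t,y_2)\|\leq \|S_i(t,y_2)-y_2\|+\|y_2-S_j(t,y_2)\|$ combined with \eqref{S_prop2} yields the bound $t\mathcal{L}(y_2)$ with $\mathcal{L}(y):=2\max_{i\in I}\|a(y,i)\|$, which is bounded on bounded sets by \hyperref[cond:P3]{(P3)}. Condition \hyperref[cnd:a3]{(A3)} follows from \hyperref[cond:P2]{(P2)} with $L_q=1+L_\sigma$, using $q_\theta(y_1)-q_\theta(y_2)=(y_1-y_2)+(\sigma(y_1,\theta)-\sigma(y_2,\theta))$ and $\kappa(\Theta)=1$. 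Condition \hyperref[cnd:a4]{(A4)} is exactly \hyperref[cond:P4]{(P4)}, the first part of \hyperref[cnd:a5]{(A5)} is \hyperref[cond:P5]{(P5)}, and its second part is trivial with $L_p=0$ because $p_\theta$ is independent of the state. Finally, since $p_\theta(S_i(t,y))\equiv h(\theta)$ carries no $y$, the supremum in \hyperref[cnd:a1]{(A1)} disappears, and the required finiteness follows by bounding $\|q_\theta(S_i(t,y_*))-y_*\|\leq \|S_i(t,y_*)-y_*\|+\|\sigma(S_i(t,y_*),\theta)\|$, integrating against $h$ via \hyperref[cond:P2]{(P2)} and \hyperref[cond:P1]{(P1)}, and invoking $\|S_i(t,y_*)-y_*\|\leq t\|a(y_*,i)\|$ from \eqref{S_prop2} to make the $e^{-\ulambda t}$-weighted integral converge.

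The verification requiring the most care is \hyperref[cnd:a6]{(A6)}. Its first inequality coincides with the first part of \hyperref[cond:P6]{(P6)}. For the second, the position-independence of $p_\theta$ turns the integral into $\kappa(\theta(y_1,y_2))$, so it suffices to bound this measure below. I would do so by showing that the set $\{\theta:\,\|\sigma(y_1,\theta)-\sigma(y_2,\theta)\|\leq\|y_1-y_2\|\}$ featuring in \hyperref[cond:P6]{(P6)} is contained in $\theta(y_1,y_2)$: on this set $\|q_\theta(y_1)-q_\theta(y_2)\|\leq\|y_1-y_2\|+\|\sigma(y_1,\theta)-\sigma(y_2,\theta)\|\leq 2\|y_1-y_2\|$, which lies within the expansion threshold $L_q\|y_1-y_2\|$ used in Step 4 of the proof of Theorem \ref{thm_main}; hence $\kappa(\theta(y_1,y_2))\geq\delta_h$ and one may take $\delta_p=\delta_h$. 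This passage from the prescribed non-expansiveness of $\sigma$ to that of $q_\theta$ is the delicate point, since $q_\theta$ inherits the contraction property of $\sigma$ only up to an additive term, and is where I expect the main effort to lie.

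Lastly, the spectral gap condition is automatic: with $L=1$ and $L_q=1+L_\sigma$, \eqref{cnd:LL} reads $(1+L_\sigma)\olambda+\alpha<\ulambda$, which is precisely the dissipativity threshold $\alpha<\ulambda-(1+L_\sigma)\olambda$ imposed in \hyperref[cond:P3]{(P3.1)} (in particular forcing $\alpha<0$, as noted there). With \hyperref[cnd:a1]{(A1)}--\hyperref[cnd:a6]{(A6)} and \eqref{cnd:LL} established and $c$ chosen sufficiently large as in \eqref{nc} (with $M_{\mathcal{L}}$ computed from the above $\mathcal{L}$ and $a,b$ from \eqref{def:ab}), Theorem \ref{thm_main} applies directly and yields the unique, exponentially attracting invariant measure $\mu_*\in\Mp^{\rho_c,1}(X)$ together with the stated Fortet--Mourier rate, completing the proof.
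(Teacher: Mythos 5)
Your overall route is exactly the paper's: recognise the jump chain as the Section \ref{sec:3} model with $p_\theta(\cdot)\equiv h(\theta)$ and $q_\theta(y)=y+\sigma(y,\theta)$, verify \hyperref[cnd:a1]{(A1)}--\hyperref[cnd:a6]{(A6)} together with \eqref{cnd:LL} for $L=1$, $L_q=1+L_\sigma$ and $\mathcal{L}(y)=2\max_{i\in I}\norma{a(y,i)}$, and then invoke Theorem \ref{thm_main}. Your verifications of \hyperref[cnd:a1]{(A1)}--\hyperref[cnd:a5]{(A5)} and of the spectral condition \eqref{cnd:LL} coincide with the paper's.

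The gap is in \hyperref[cnd:a6]{(A6)}, precisely the step you flag as delicate. From $\norma{\sigma(y_1,\theta)-\sigma(y_2,\theta)}\leq\norma{y_1-y_2}$ you deduce $\norma{q_\theta(y_1)-q_\theta(y_2)}\leq 2\norma{y_1-y_2}$ and then assert that this lies within the threshold $L_q\norma{y_1-y_2}$. Since $L_q=1+L_\sigma$, that assertion is equivalent to $L_\sigma\geq 1$, which is nowhere assumed: \hyperref[cond:P2]{(P2)} allows any $L_\sigma>0$, and for $L_\sigma<1$ your inclusion is simply false. Concretely, take $H=Y=\mathbb{R}$, $\Theta=[0,1]$, $h\equiv 1$ and $\sigma(y,\theta)=\phi(\theta)y$ with $\phi$ continuous, equal to $1$ on $[0,\tfrac14]$ and to $0$ on $[\tfrac12,1]$; then $L_\sigma=\int_\Theta\phi\,d\theta<1$, yet every $\theta\in[0,\tfrac14]$ belongs to your starting set while $q_\theta(y)=2y$ expands distances by the factor $2>L_q$. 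Nor can you repair this by simply enlarging $L_\sigma$ to be at least $1$ (which \hyperref[cond:P2]{(P2)} would tolerate), because \hyperref[cond:P3]{(P3.1)} and hence \eqref{cnd:LL} are stated relative to the same $L_\sigma$: you would be proving the theorem only under the strictly stronger dissipativity requirement $\alpha<\ulambda-2\olambda$. The paper's argument avoids the loss entirely by working with the set $\{\theta:\norma{\sigma(y_1,\theta)-\sigma(y_2,\theta)}\leq L_\sigma\norma{y_1-y_2}\}$, on which the triangle inequality gives
\begin{equation}
\norma{q_\theta(y_1)-q_\theta(y_2)}\leq\norma{y_1-y_2}+L_\sigma\norma{y_1-y_2}=L_q\norma{y_1-y_2},
\end{equation}
i.e.\ the identity term is absorbed with no slack, for every $L_\sigma>0$; this matches the threshold $L_q$ carried by the set $R_1(t)$ in Step 4 of the proof of Theorem \ref{thm_main}. (Be aware that the paper uses \hyperref[cond:P6]{(P6)} and the set $\theta(y_1,y_2)$ of \hyperref[cnd:a6]{(A6)} with the thresholds $L_\sigma\norma{y_1-y_2}$ and $L_q\rho(y_1,y_2)$, respectively, even though both are displayed with factor $1$; it is this reading, not the literal one, that makes the chain of inclusions work.) Replacing your factor-$1$ set by the factor-$L_\sigma$ set and taking $\delta_p=\delta_h$ repairs the argument, after which the rest of your proposal goes through as written.
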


\begin{proof}
In view of Theorem \ref{thm_main}, it suffices to show that conditions \hyperref[cnd:a1]{(A1)}-\hyperref[cnd:a6]{(A6)} hold with $p(\theta,\cdot)\equiv h(\theta)$ for $\theta\in\Theta$, and $L$, $L_q$, $\ulambda$, $\olambda$, $\alpha$ satisfying \eqref{cnd:LL}. 

First of all, as we have mentioned in Section \ref{sub:p2}, condition \hyperref[cond:P3]{(P3)} ensures that the semiflows $S_i$, $i\in I$, generated by \eqref{df_v}, enjoys properties \eqref{S_prop1} and \eqref{S_prop2}. This clearly implies that \hyperref[cnd:a2]{(A2)} holds with $L=1$, $\alpha<\ulambda-(1+L_{\sigma})\olambda$ and $\mathcal{L}$ given by $\mathcal{L}(y):=2\max_{i\in I} \norma{a(y,i)}$, which is bounded on bounded sets, as required.

Further, we show that condition \hyperref[cnd:a1]{(A1)} is satisfied. From \hyperref[cond:P1]{(P1)} we know that 
$$M:=\int_{\Theta} \norma{\sigma(y_*,\theta)}h(\theta)\,d\theta<\infty\;\;\;\mbox{for some}\;\;\;y_*\in Y.$$
Keeping in mind \eqref{S_prop2} and applying \hyperref[cond:P2]{(P2)} we have
\begin{align*}
\int_{\Theta} \|q_{\theta}(&S_i(t,y_*))-y_*\|h(\theta)\,d\theta
=\int_{\Theta} \norma{S_i(t,y_*)+\sigma(S_i(t,y_*),\theta)-y_*}h(\theta)\,dt\theta\\
&\leq \int_{\Theta} \norma{S_i(t,y_*)-y_*}h(\theta)\,d\theta+ \int_{\Theta} \norma{\sigma(S_i(t,y_*),\theta)-\sigma(y_*,\theta)}h(\theta)\,d\theta\\
&\quad+\int_{\Theta} \norma{\sigma(y_*,\theta)}h(\theta)\,d\theta\leq t\norma{a(y_*,i)}+L_{\sigma}t\norma{a(y_*,i)}+M\\
&\leq (1+L_{\sigma})\max_{j\in I} \norma{a(y_*,i)}t+M\;\;\;\mbox{for all}\;\;\;i\in I.
\end{align*}
Hence, setting $K:=(1+L_{\sigma})\max_{j\in I} \norma{a(y_*,i)}$, we obtain
\begin{align*}
\int_0^{\infty} e^{-\ulambda t}\int_{\Theta} \|q_{\theta}(&S_i(t,y_*))-y_*\|h(\theta)\,d\theta\,dt\leq \frac{K}{\ulambda^2}+\frac{M}{\ulambda}<\infty\;\;\;\mbox{for all}\;\;\;i\in I.
\end{align*}
From hypothesis \hyperref[cond:P2]{(P2)} it follows directly that \hyperref[cnd:a3]{(A3)} holds with $L_q:=1+L_{\sigma}$, since
\begin{align*}
\int_{\Theta} \norma{q_{\theta}(y_1)-q_{\theta}(y_2)}h(\theta)\,d\theta
&\leq \norma{y_1-y_2}+\int_{\Theta} \norma{\sigma(y_1,\theta)-\sigma(y_2,\theta)}\kappa(d\theta)\\
&\leq (1+L_{\sigma})\norma{y_1-y_2}.
\end{align*}
Conditions \hyperref[cnd:a4]{(A4)} and \hyperref[cnd:a5]{(A5)} are just equivalent to \hyperref[cond:P4]{(P4)} and \hyperref[cond:P5]{(P5)}, respectively. Moreover, \hyperref[cnd:a6]{(A6)} gives immediately \hyperref[cond:P6]{(P6)}, since
$$\{\theta\in \Theta:\norma{\sigma(y_1,\theta)-\sigma(y_2,\theta)}\leq L_{\sigma}\norma{y_1-y_2}\}$$
is a subset of
$$\{\theta:\norma{q_{\theta}(y_1)-q_{\theta}(y_2)}\leq L_{q}\norma{y_1-y_2}\}.$$

Finally, using the upper bound of $\alpha$, specified in \hyperref[cond:P3]{(P3.1)}, we infer that
$$LL_q\olambda+\alpha= (1+L_{\sigma})\olambda+\alpha< \ulambda,$$
which finishes the proof.
\end{proof}
\bibliographystyle{acm}
\bibliography{references}
\end{document}